\documentclass[leqno]{amsart}
\usepackage{amsmath,amsfonts,amsthm,amssymb,indentfirst,epic,url,centernot}

\setlength{\textwidth}{6.5in}
\setlength{\textheight}{9.1in}
\setlength{\evensidemargin}{0in}
\setlength{\oddsidemargin}{0in}
\setlength{\topmargin}{-.5in}
\sloppy

\setlength{\mathsurround}{.167em}

\newtheorem{theorem}{Theorem}
\newtheorem{lemma}[theorem]{Lemma}
\newtheorem{corollary}[theorem]{Corollary}
\newtheorem{proposition}[theorem]{Proposition}

\newtheorem{question}[theorem]{Question}

\newcommand{\Q}{\mathbb Q}
\newcommand{\Z}{\mathbb Z}
\renewcommand{\r}{\mathrm}
\DeclareMathOperator{\lt}{length}
\newcommand{\m}{\mathbf{m}}

\raggedbottom

\begin{document}

\begin{center}
\texttt{Comments, corrections, and related references welcomed,
as always!}\\[.5em]
{\TeX}ed \today
\vspace{2em}
\end{center}

\title{Commuting matrices, and modules over Artinian local rings}
\thanks{This preprint is readable online at
\url{http://math.berkeley.edu/~gbergman/papers/unpub}\,.
}

\subjclass[2010]{Primary: 13E10, 13E15, 15A27,
%                        artnR,M fg',#gens cmtg_mxs
Secondary: 13C13, 15A30, 16D60, 16P20.}
%     ...mdls alg_sys_mxs sspl_mdls Artin
\keywords{Commuting matrices, lengths of faithful modules over
commutative local Artinian rings.}

\author{George M. Bergman}
\address{University of California\\
Berkeley, CA 94720-3840, USA}
\email{gbergman@math.berkeley.edu}

\begin{abstract}
Gerstenhaber \cite{MG} proves that any commuting pair
of $n\times n$ matrices over a field $k$ generates a
$\!k\!$-algebra $A$ of $\!k\!$-dimension $\leq n.$
A well-known example shows that the corresponding statement for
$4$ matrices is false.
The question for $3$ matrices is open.

Gerstenhaber's result can be looked at as a statement about
the relationship between the length of a
$\!2\!$-generator finite-dimensional commutative $\!k\!$-algebra
$A,$ and the lengths of faithful $\!A\!$-modules.
Wadsworth \cite{W} generalizes this result to a larger class of
commutative rings than those generated by two elements over a field.
We recover his result, with a slightly improved argument.

We then explore some examples, raise further questions,
and make a bit of progress toward answering some of these.

An appendix gives some lemmas on generation and subdirect
decompositions of modules over not necessarily commutative Artinian
rings, generalizing a special case noted in the paper.
\end{abstract}

\maketitle
% - - - - - - - - - - - - - - - - - - - - - - - - - - - - - -

When I drafted this note, I thought the main result, Theorem~\ref{T.Rt},
was new; but having learned that it is not, I probably
won't submit this for publication
unless I find further strong results to add.
However, others may find interesting
the observations, partial results, and questions noted below,
and perhaps make some progress on them.

Noncommutative ring theorists might find the lemmas in the
appendix, \S\ref{S.gens.socs}, of interest.
(I would be interested to know whether they are known.)

\section{Wadsworth's generalization of Gerstenhaber's result}\label{S.2-gens}

To introduce Wadsworth's strengthening of
the result of Gerstenhaber quoted in the first sentence of the abstract,
note that a $\!k\!$-algebra of $n\times n$ matrices
generated by two commuting matrices can be viewed as an
action of the polynomial ring $k[s,\,t]$ on the vector space $k^n.$
A class of rings generalizing those of the form $k[s,\,t]$
are the rings $R[t]$ for $R$ a principal ideal domain.
The analog of an action of $k[s,\,t]$ on a finite-dimensional
$\!k\!$-vector space is then an $\!R[t]\!$-module of finite length.
A key tool in our study of such actions will be

\begin{corollary}[to the Cayley-Hamilton Theorem; cf.\ {\cite[Lemma~1]{W}}]\label{C.CH}
Let $R$ be a commutative ring, $M$ an $\!R\!$-module which can be
generated by $n<\infty$ elements, and $f$ an endomorphism of $M.$
Then $f^n$ is an $\!R\!$-linear
combination of $1_M,$ $f,$ $\dots,$ $f^{n-1}.$
\end{corollary}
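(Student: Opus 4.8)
The plan is to run the classical ``determinant trick'' that underlies the Cayley--Hamilton theorem (the same device that proves Nakayama's lemma), applied to the $R[t]$-module structure on $M$ in which $t$ acts by $f$. First I would fix generators $m_1,\dots,m_n$ of $M$ over $R$; since $f(m_i)\in M=\sum_j R\,m_j$, I may choose scalars $a_{ij}\in R$ with $f(m_i)=\sum_{j=1}^n a_{ij}\,m_j$ for each $i$. (These $a_{ij}$ are far from canonical, because the $m_i$ need not be independent, but any choice will serve.) Assembling them into a matrix $A=(a_{ij})\in M_n(R)$ and writing $\mathbf{m}=(m_1,\dots,m_n)^{\mathrm{T}}\in M^n$, the defining relations say precisely that $(t\,I_n-A)\,\mathbf{m}=0$, an identity of $M$-valued column vectors, where $I_n$ is the identity matrix and everything is happening over the commutative ring $R[t]$.

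Next I would multiply this identity on the left by the adjugate matrix $\mathrm{adj}(t\,I_n-A)\in M_n(R[t])$. Since $\mathrm{adj}(B)\,B=\det(B)\,I_n$ holds over any commutative ring, applied with $B=t\,I_n-A$ inside $M_n(R[t])$ and then acting on $\mathbf{m}$, we obtain $\det(t\,I_n-A)\cdot m_i=0$ in $M$ for every $i$. As the $m_i$ generate $M$ over $R$, hence over $R[t]$, this shows that the element $\chi(t):=\det(t\,I_n-A)\in R[t]$ annihilates $M$, i.e.\ that the endomorphism $\chi(f)$ of $M$ is zero. But $\chi(t)$ is monic of degree $n$, say $\chi(t)=t^n-c_{n-1}t^{n-1}-\cdots-c_0$ with $c_i\in R$, so $\chi(f)=0$ rearranges to $f^n=c_{n-1}f^{n-1}+\cdots+c_0\,1_M$, the asserted $R$-linear combination.

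The one place where care is genuinely needed — and the reason this is worth stating as a lemma rather than dismissing as obvious — is the step from ``$(t\,I_n-A)\,\mathbf{m}=0$'' to ``$\det(t\,I_n-A)$ kills each $m_i$''. Here $\mathbf{m}$ is a vector with entries in the module $M$, not in the ring, so the manipulation is the natural action of the matrix ring $M_n(R[t])$ on the column module $M^n$; the identity $\mathrm{adj}(B)\,B=\det(B)\,I_n$ is applied in $M_n(R[t])$ first, and associativity of this action then transfers it to $\mathbf{m}$. Commutativity of $R$ (hence of $R[t]$) is exactly what makes the adjugate identity available, and it is the only hypothesis on $R$ used; note in particular that $M$ need not be finitely presented and the generators $m_i$ need not be an independent family, which is what makes the statement applicable in the generality the paper wants.
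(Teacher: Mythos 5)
Your proof is correct and is in essence the same argument the paper gives: choosing the matrix $A=(a_{ij})$ with $f(m_i)=\sum_j a_{ij}m_j$ is exactly the paper's lift of $f$ to an endomorphism $g$ of the free module $R^n$, and both arguments conclude that $f$ satisfies the characteristic polynomial of that matrix. The only difference is one of packaging: the paper invokes the Cayley--Hamilton theorem for $g$ as known and descends it along the surjection $R^n\to M$, whereas you re-derive the needed annihilation directly on $M$ via the adjugate identity.
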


\begin{proof}
Write $M$ as a homomorphic image of $R^n.$
Since $R^n$ is projective, we can lift $f$ to an endomorphism $g$
thereof.
Since $g$ satisfies its characteristic polynomial, $f$
satisfies the same polynomial.
\end{proof}

This shows that the unital subalgebra of $\r{End}_R(M)$ generated
by $f$ will be spanned over $R$ by the $f^i$ with
$0\leq i<n,$ but doesn't
say anything about the size of the contribution of each $f^i.$
In the next lemma, we obtain information of that sort by re-applying the
above corollary to various $\!i\!$-generator submodules of~$M.$

\begin{lemma}\label{I_i}
Let $R$ be a commutative ring, $M$ a finitely generated
$\!R\!$-module, and $f$ an endomorphism of $M.$
For each $i\geq 0,$ let $I_i$ be the ideal of $R$ generated
by all elements $u$ such that
$u\,M$ is contained in an $\!f\!$-invariant submodule of $M$ which is
$\!i\!$-generated as an $\!R\!$-module.
\textup{(}So $I_0\subseteq R$ is the annihilator of $M,$
and we have $I_0 \subseteq I_1 \subseteq \dots,$ with the ideals
becoming $R$ once we reach an $i$ such that $M$ is
$\!i\!$-generated.\textup{)}

Let $A$ be the unital $\!R\!$-algebra
of endomorphisms of $M$ generated by $f.$
For each $i\geq -1,$ let $A_i$ be the $\!R\!$-submodule of $A$
spanned by $\{1,\ f,\ f^2,\ \dots\,,\ f^i\}.$
Then\\[.2em]
\textup{(i)} For all $i\geq 0,$ the $\!R\!$-module $A_i/A_{i-1}$
is a homomorphic image of $R/I_i.$
Hence\\[.2em]
\textup{(ii)} If $M$ has finite length, and can be generated
by $d$ elements, then $A$ has length
$\leq \sum_{i=0}^{d-1}\,\lt(R/I_i)$ as an $\!R\!$-module.
\end{lemma}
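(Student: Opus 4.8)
The plan is to derive part~(ii) formally from part~(i), and to prove part~(i) by identifying the cyclic $\!R\!$-module $A_i/A_{i-1}$ and comparing its annihilator with $I_i$. (The parenthetical assertions in the statement --- that $I_0 = \r{Ann}(M),$ that $I_0\subseteq I_1\subseteq\dots,$ and that $I_i = R$ once $M$ is $\!i\!$-generated --- are immediate from the definition, since an $\!i\!$-generated submodule is also $\!(i{+}1)\!$-generated and one may take the invariant submodule $N$ above to be $0$ or $M.$)

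For~(ii), assume~(i). Since $M$ is $\!d\!$-generated, Corollary~\ref{C.CH} (applied to $M$ with $n=d$) gives $f^d\in A_{d-1},$ and then all higher powers of $f$ lie in $A_{d-1}$ as well, so $A=A_{d-1}.$ Thus $0 = A_{-1}\subseteq A_0\subseteq\dots\subseteq A_{d-1}=A$ is a finite filtration of the $\!R\!$-module $A$ whose $\!i\!$-th factor $A_i/A_{i-1},$ being by~(i) a homomorphic image of $R/I_i,$ has $R$-length $\leq\lt(R/I_i).$ Summing over $0\leq i\leq d-1$ yields $\lt(A)\leq\sum_{i=0}^{d-1}\lt(R/I_i).$ The hypothesis that $M$ has finite length guarantees the terms are finite; otherwise the inequality is read in $\{0,1,2,\dots,\infty\}.$

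For~(i), fix $i\geq 0.$ Because $A_i = A_{i-1} + R\,f^i,$ the quotient $A_i/A_{i-1}$ is cyclic, generated by the image of $f^i;$ hence $A_i/A_{i-1}\cong R/J_i,$ where $J_i = \{u\in R : u\,f^i\in A_{i-1}\}$ is an ideal of $R.$ So it suffices to show $I_i\subseteq J_i,$ and since $J_i$ is an ideal it is enough to show that every generator $u$ of $I_i$ lies in $J_i$ --- that is, if $u\,M\subseteq N$ for some $\!f\!$-invariant, $\!i\!$-generated submodule $N\subseteq M,$ then $u\,f^i\in A_{i-1}.$ Given such $u$ and $N,$ apply Corollary~\ref{C.CH} to the $\!i\!$-generated module $N$ and the endomorphism $f|_N{:}$ there are $c_0,\dots,c_{i-1}\in R$ with $(f|_N)^i = \sum_{j=0}^{i-1} c_j\,(f|_N)^j$ on $N.$ Now for any $m\in M$ we have $u\,m\in N,$ so this relation applied to $u\,m,$ together with the $\!R\!$-linearity of $f$ (so that $f$ commutes with multiplication by $u$), gives $u\,f^i(m) = \sum_{j=0}^{i-1} c_j\,u\,f^j(m),$ i.e.\ $u\,f^i = \sum_{j=0}^{i-1}(u\,c_j)\,f^j$ in $\r{End}_R(M).$ The right-hand side lies in $A_{i-1},$ so $u\in J_i,$ as required.

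I expect the only real obstacle to be this last computation: the Cayley--Hamilton relation supplied by Corollary~\ref{C.CH} holds only on the small submodule $N,$ not on $M,$ and must be promoted to an identity among the operators $f^j$ acting on all of $M.$ The mechanism is precisely that it is $u\,M$ (not $M$) that sits inside $N$ and that multiplication by $u$ is central in $\r{End}_R(M),$ which is what lets one slide $u$ past the powers of $f;$ everything else is bookkeeping.
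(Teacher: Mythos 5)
Your proposal is correct and follows essentially the same route as the paper: both reduce (i) to showing $u\,f^i\in A_{i-1}$ for each generator $u$ of $I_i$, obtain the Cayley--Hamilton relation on the small invariant submodule via Corollary~\ref{C.CH}, and transfer it to all of $M$ by sliding $u$ past the powers of $f$, then deduce (ii) by summing over the filtration $A_{-1}\subseteq A_0\subseteq\dots\subseteq A_{d-1}=A.$ The only cosmetic difference is that you package the key step as the containment $I_i\subseteq J_i$ for the conductor ideal $J_i=\{u: u f^i\in A_{i-1}\},$ which is the same computation in slightly different clothing.
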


\begin{proof}
Since $A_i=Rf^i+A_{i-1},$ (i) will follow if we show that
$I_i\,f^i\subseteq A_{i-1},$ which by definition of $I_i$ is
equivalent to showing that, for each $u\in R$ such that $u\,M$
is contained in an $\!i\!$-generated $\!f\!$-invariant
submodule $M'$ of $M,$ we have $u f^i\in A_{i-1}.$
Given such $u$ and $M',$ let $f'$ be the restriction of $f$ to $M'.$
By Corollary~\ref{C.CH},
$f'^i$ is an $\!R\!$-linear combination of the lower powers of $f';$ say
$\sum_{j<i} a_j f'^j.$
Hence the restriction
of $f$ to $u\,M\subseteq M'$ satisfies the same relation.
This says that $f^i u = \sum_{j<i} a_j f^j u,$ equivalently, that
$u f^i=\sum_{j<i} u a_j f^j,$ showing that $u f^i\in A_{i-1},$ as
required.

We deduce (ii) by summing over the steps of the chain
$\{0\}=A_{-1}\subseteq A_0\subseteq\dots\subseteq A_{d-1}=A.$
\end{proof}

We can now prove

\begin{theorem}[Wadsworth {\cite[Theorem~1]{W}}, after Gerstenhaber {\cite[Theorem~2, p.\,245]{MG}}]\label{T.Rt}
Let $M$ be a module of finite length over a commutative
principal ideal domain $R,$ let $f$ be an endomorphism of $M,$ and
let $A$ be the unital $\!R\!$-algebra of
$\!R\!$-module endomorphisms of $M$ generated by $f.$
Then
\begin{equation}\begin{minipage}[c]{35pc}\label{d.length}
$\lt_R(A)\ \leq\ \lt_R(M).$
\end{minipage}\end{equation}
\end{theorem}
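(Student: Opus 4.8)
The plan is to combine Lemma~\ref{I_i}(ii) with the classical structure theorem for finitely generated modules over a principal ideal domain. One may first dispose of the case in which $R$ is a field: there Corollary~\ref{C.CH} already gives the result, since it says exactly that $A$ is spanned over $R$ by $1,f,\dots,f^{n-1}$, where $n=\lt_R(M)$. So assume $R$ is not a field. Then the finite-length module $M$ is torsion (a nonzero free summand would force $\lt_R(M)=\infty$), so, taking $d$ to be the \emph{minimal} number of generators of $M$, we may write $M\cong\bigoplus_{j=1}^{d}R/(c_j)$ for nonzero nonunits $c_1\mid c_2\mid\dots\mid c_d$ of $R$, with each $R/(c_j)$ of finite length and $\lt_R(M)=\sum_{j=1}^{d}\lt_R\bigl(R/(c_j)\bigr)$. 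By Lemma~\ref{I_i}(ii) it then suffices to prove
\[
\sum_{i=0}^{d-1}\lt_R\bigl(R/I_i\bigr)\ \le\ \sum_{j=1}^{d}\lt_R\bigl(R/(c_j)\bigr),
\]
and I would establish this by comparing the two sides term by term, after reversing the order of one of the sums.

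The one substantive point is the observation that, since $f$ is $\!R\!$-linear, the submodule $c\,M\subseteq M$ is $\!f\!$-invariant for \emph{every} $c\in R$. I would apply this with $c=c_{d-i}$ for $0\le i\le d-1$: because $c_1\mid\dots\mid c_d$, the element $c_{d-i}$ annihilates the summands $R/(c_j)$ with $j\le d-i$, so $c_{d-i}M\cong\bigoplus_{j=d-i+1}^{d}R/\bigl(c_j/c_{d-i}\bigr)$ is a direct sum of $i$ cyclic modules, hence $\!i\!$-generated. Thus $c_{d-i}M$ is itself an $\!f\!$-invariant, $\!i\!$-generated submodule of $M$, so $c_{d-i}\in I_i$ by the definition of $I_i$; equivalently $(c_{d-i})\subseteq I_i$, whence $\lt_R(R/I_i)\le\lt_R\bigl(R/(c_{d-i})\bigr)$. (For $i=0$ this merely restates that $I_0=\operatorname{ann}(M)=(c_d)$.) Summing over $i=0,\dots,d-1$ and setting $k=d-i$ yields the displayed inequality, and with Lemma~\ref{I_i}(ii) this gives~\eqref{d.length}.

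I do not expect a genuine obstacle here: the content is all in Lemma~\ref{I_i}, and what remains is just the PID structure theorem plus two small observations---that one should take $d$ minimal so that Lemma~\ref{I_i}(ii) is not wasteful, and that the $\!f\!$-invariance demanded in the definition of $I_i$ is automatic for the submodules $c\,M$, so the action of $f$ never has to be examined. (A reader who prefers to avoid invariant factors may instead split $M$ into its primary components, which any endomorphism preserves and whose lengths sum to $\lt_R(M)$, thereby reducing to the case where $R$ is a discrete valuation ring and each $c_j$ is a power of the uniformizer; the argument is then unchanged.)
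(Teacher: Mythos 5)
Your proposal is correct and is essentially the paper's own proof: both invoke the invariant-factor decomposition of $M$ over the PID $R$, observe that the $\!i\!$-th invariant factor (in your reversed indexing, $c_{d-i}$) multiplies $M$ into an $\!i\!$-generated, automatically $\!f\!$-invariant submodule and hence lies in $I_i$, and then sum the resulting inequalities $\lt_R(R/I_i)\le\lt_R(R/(c_{d-i}))$ via Lemma~\ref{I_i}(ii). The separate treatment of the field case and the explicit remark that $cM$ is $\!f\!$-invariant for every scalar $c$ are harmless refinements of what the paper does implicitly.
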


\begin{proof}
Because $R$ is a commutative principal ideal domain, we can write $M$
as $R/(q_0)\oplus\dots\oplus R/(q_{d-1}),$ where
$q_{d-1}\,|\,q_{d-2}\,|\,\dots\,|\,q_0$
(\cite[Theorem III.7.7, p.\,151]{SL.Alg}, with terms relabeled).
Note that for $i=0,\dots,d-1,$ the element $q_i$ annihilates
the summands $R/(q_i),\dots,R/(q_{d-1})$ of the above decomposition.
Hence $q_i\,M$ is generated by $i$ elements, so in
the notation of the preceding lemma, $q_i\in I_i.$
(In fact, it is not hard to check that $I_i=(q_i).)$
In particular, $\lt_R(R/I_i)\leq\lt_R(R/(q_i)).$

By that lemma, the length of $A$ as an $\!R\!$-module
is $\leq\sum \lt_R(R/I_i),$ which by the above inequality
is $\leq\sum\lt_R(R/(q_i))=\lt_R(M).$
\end{proof}

We remark that in the above results,
the {\em length} of a module is really a proxy for its image in the
Grothendieck group of the category of finite-length modules.
I did not so formulate the results for simplicity of
wording, and to avoid excluding readers not
familiar with that viewpoint.
This has the drawback that when we want to pass from that
lemma back to Gerstenhaber's result on $\!k\!$-dimensions
of matrix algebras, the lengths do not directly
determine these dimensions, which the elements of the
Grothendieck group would.
(E.g., the $\!\mathbb{R}\!$-algebras
$\mathbb{R}[s]/(s)$ and $\mathbb{R}[s]/(s^2+1)$ both have length~$1$
as modules, but their $\!\mathbb{R}\!$-dimensions
are $1$ and $2$ respectively.)

However, we can get around this by applying Theorem~\ref{T.Rt}
after extension of scalars to the algebraic closure of~$k.$
Indeed, if $A$ is a $\!k\!$-algebra of endomorphisms of
a finite-dimensional $\!k\!$-vector-space $M,$ then the
$\!k\!$-dimensions of $A$ and $M$ are
unaffected by extending scalars to the algebraic closure $\bar{k}$
of $k;$ and for $k$ algebraically closed, the length of $M$ as an
$\!A\!$-module is just its $\!k\!$-dimension.
Hence an application of Theorem~\ref{T.Rt} with $R=\bar{k}[s]$
to these extended modules and algebras gives us Gerstenhaber's
result for the original modules and algebras.

\section{Some notes on the literature}\label{S.digr}
The hard part of Gerstenhaber's proof of his result was a
demonstration that the variety (in the sense of algebraic
geometry) of all pairs of commuting $n\times n$ matrices is irreducible.
Guralnick~\cite{G} notes that this fact had been proved earlier
by Motzkin and Taussky~\cite{M+T}, and gives a shorter proof
of his own.

The first proof of Gerstenhaber's result by non-algebraic-geometric
methods is due to Barr\'{i}a and Halmos~\cite{B+H}.
Wadsworth \cite{W} abstracts that proof by replacing $k[s]$ by a
general principal ideal domain~$R.$
His argument differs from ours in that he
obtains Corollary~\ref{C.CH} only for
$R$ a principal ideal domain and $M$ a torsion $\!R\!$-module.
This restriction arises from his calling on
the fact that every such module $M$ {\em embeds} in
a free module over some factor-ring of $R$ (in the
notation of our proof of Theorem~\ref{T.Rt},
the free module of rank $d$ over $R/(q_0)),$ where we
use, instead, the fact that any finitely generated $\!R\!$-module
is a {\em homomorphic image} of a free $\!R\!$-module of finite rank,
which is true for any commutative ring $R.$
Of course, the restriction assumed in Wadsworth's
proof holds in the case to which we both apply the result;
but the more general statement of Corollary~\ref{C.CH} seemed
worth recording.

I mentioned above that where I speak of the length of a module, a more
informative statement would refer to its image in a Grothendieck group.
Wadsworth uses an invariant of finite-length
modules over PIDs that is equivalent to that more precise
information: in the notation of our proof of Theorem~\ref{T.Rt},
the equivalence class, under associates, of the
product $q_0\ldots q_{d-1}\in R.$
He also shows \cite[Theorem~2]{W} that as an $\!R\!$-module,
$A$ can in fact be embedded in $M.$
(However, we will note at the end of \S\ref{S.3-gen} that it
cannot in general be so embedded as an $\!R[t]\!$-module.)

Gerstenhaber~\cite{MG} proves a bit more about the algebra
generated by two commuting $n\times n$ matrices
than we have quoted: he also shows that it
is {\em contained} in a commutative matrix algebra
of dimension {\em exactly} $n,$ at least after
a possible extension of the base field.
(He mentions that he does not know whether this is true
without extension of the base field.)
We shall not discuss that property further here.

Guralnick and others \cite{G+S}, \cite{S+Sh} have continued
the algebraic geometric investigation of these questions.
Some investigations which, like this note, focus more on methods of
linear algebra are \cite{L+L}, \cite{H+O}, \cite{N+S}.
For an extensive study of the subject, see
O'Meara, Clark and Vinsonhaler \cite[Chapter~5]{O+C+V}.
\vspace{.3em}

Returning to Theorem~\ref{T.Rt}, the hypothesis
that $R$ be a principal ideal domain can be weakened,
with a little more work, to say that $R$ is
a Dedekind domain, or even a Pr\"{u}fer domain, since under
these assumptions, every finite-length homomorphic image of $R$ is
a direct product of uniserial rings, which is what is really needed
to get the indicated description of finite-length modules
(though I don't know a reference stating this description in
those cases).
We shall discuss in \S\S\ref{S.genlz}-\ref{S.other_Qs} wider
generalizations that one may hope for, and will make some
progress in those directions.

But for the next two sections, let us return to commuting
matrices over a field, and examine what can happen in
algebras generated by more than two such matrices.

\section{Counterexamples with $4$ generators}\label{S.4-gen}

The standard example showing that a $\!4\!$-generator algebra
of $n\times n$ matrices can have dimension $>n$ takes $n=4,$ and
for $A,$ the algebra of $4\times 4$ matrices
generated by $e_{13},$ $e_{14},$ $e_{23}$ and $e_{24}.$
These commute, since their pairwise products are
zero, and $A$ has for a basis these four elements and the identity
matrix $1,$ and so has dimension $5>4=n.$

If the reader finds it disappointing that the extra dimension comes
from the convention that algebras are unital, note
that without that convention, one can obtain the same subalgebra
from the four generators $1+e_{13},$ $e_{14},$ $e_{23},$ $e_{24},$
using the fact that the not-necessarily-unital algebra
generated by an upper triangular matrix (here $1+e_{13})$
always contains both the diagonal part (here $1)$ and the
above-diagonal part (here $e_{13})$ of that matrix.

One can modify this example to get
commutative $\!4\!$-generator matrix algebras
in which the dimension of the algebra exceeds the
size of the matrices by an arbitrarily large amount.
Namely, for any $m,$ let us form within the algebra of $4m\times 4m$
matrices a ``union of $m$ copies'' of each of the matrix units used
in the above example.
To do this, let
$E_{13}=\sum_{j=0}^{m-1} e_{4j+1,\,4j+3},$
$E_{14}=\sum_{j=0}^{m-1} e_{4j+1,\,4j+4},$
$E_{23}=\sum_{j=0}^{m-1} e_{4j+2,\,4j+3},$
$E_{24}=\sum_{j=0}^{m-1} e_{4j+2,\,4j+4},$ and let us also choose a
diagonal matrix $D$ having one value, $\alpha_0\in k-\{0\},$ in the
first four diagonal positions, a different
value, $\alpha_1\in k-\{0\},$ in the next four, and so on; in other
words, $D=\sum_{j=0}^{m-1}\,\alpha_j(\sum_{i=1}^4 e_{4j+i,\,4j+i}).$
(Here we assume $|k|\geq m+1,$ so that the $\alpha_i$ can be taken
distinct.)
We then take as our four generators
$D+E_{13},$ $E_{14},$ $E_{23}$ and $E_{24}.$
From the fact about upper triangular matrices called on in the
preceding paragraph, the algebra these generate
contains $D,$ $E_{13},$ $E_{14},$ $E_{23}$ and $E_{24}.$
Using just $D$ and the $\!k\!$-algebra structure, one gets the
$m$ diagonal idempotent elements $\sum_{i=1}^4 e_{4j+i,\,4j+i}$
$(j=0,\dots,m-1),$ and with the help of these, one sees that
our $\!4\!$-generator algebra is the
direct product of $m$ copies of the algebra of the preceding paragraphs.
Thus, we have a $\!4\!$-generated commutative algebra
of dimension $5m$ within the ring of $4m\times 4m$ matrices.

Because of the strong way this construction used a diagonal matrix
$D,$ I briefly hoped that if we restricted attention to
algebras $A$ generated by four commuting \emph{nilpotent} matrices,
the dimension of $A$ might never exceed the size of the
matrices by more than $1.$
But the following family of examples contradicts that guess.

In describing it, I will again use the language of
vector spaces and their endomorphisms.
Let $m$ be any positive integer, and let $M$ be a
$\!(5m^2+3m)/2\!$-dimensional $\!k\!$-vector-space, with basis
consisting of elements which we name (proactively)
\begin{equation}\begin{minipage}[c]{35pc}\label{d.abxy}
$a^i\,b^j\,x,$\ \ for $i,j\geq 0,\ i\,{+}\,j\leq 2m-1,$\qquad and\qquad
$a^i\,b^j\,y,$\ \ for $i,j\geq 0,\ i\,{+}\,j\leq m-1.$
\end{minipage}\end{equation}
(So we have $2m(2m+1)/2 = 2m^2+m$ basis elements $a^i\,b^j\,x,$ and
$m(m+1)/2 = (m^2+m)/2$ basis elements $a^i\,b^j\,y,$
totaling $(5m^2+3m)/2$ elements.)

We now define four linear maps, $a,$ $b,$ $c$ and $d$ on $M.$
Of these, $a$ and $b$ act in the obvious ways
on the elements $a^i\,b^j\,x$
with $i+j<2m-1$ and on the elements $a^i\,b^j\,y$ with $i+j<m-1,$
namely, by increasing the formal exponent of $a,$
respectively, $b,$ by $1;$ while they annihilate the elements
for which these formal exponents have their maximum allowed total value,
$i+j=2m-1,$ respectively $i+j=m-1.$
On the other hand, we let $c$ annihilate $x,$ but take $y$ to $a^m\,x,$
and hence (as it must if it is to commute with $a$ and $b)$
take $a^i\,b^j\,y$ to $a^{m+i}\,b^j\,x,$ and we similarly let
$d$ annihilate $x,$ but take $y$ to $b^m\,x,$
and hence $a^i\,b^j\,y$ to $a^i\,b^{m+j}\,x.$

It is easy to verify that these four linear maps commute,
and are nilpotent.
I claim that the unital algebra that they generate has for a basis
the elements
\begin{equation}\begin{minipage}[c]{35pc}\label{d.abcd}
$a^i\,b^j$ for $i,j\geq 0,\ i+j\leq 2m-1,$\qquad and\qquad
$a^i\,b^j\,c$ and $a^i\,b^j\,d$ for $i,j\geq 0,\ i+j\leq m-1$
\end{minipage}\end{equation}
(compare with~(\ref{d.abxy})).
Indeed, it is immediate that every monomial in $a,$ $b,$ $c$ and $d$
other than those listed in~(\ref{d.abcd}) is zero.
Now suppose some $\!k\!$-linear combination of the
monomials~(\ref{d.abcd}) were zero.
By applying that linear combination to $x\in M,$ we
see that the coefficients in $k$ of the monomials $a^i\,b^j$ (with no
factor $c$ or $d)$ are all zero.
Applying the same element to $y,$ and noting that the
sets of basis elements $a^{m+i}\,b^j\,x$ $(i+j\leq m-1)$
and $a^i\,b^{m+j}\,x$ $(i+j\leq m-1)$ are disjoint,
we conclude that the coefficients of
the monomials $a^i\,b^j\,c$ and $a^i\,b^j\,d$ are also zero.

Counting the elements~(\ref{d.abcd}), we see that
the dimension of our algebra is
$(2m^2+m)+(m^2+m)/2+(m^2+m)/2=3m^2+2m,$
and this exceeds that of $M$ by $(m^2+m)/2,$ which is unbounded
as $m$ grows.
The limit as $m\to\infty$ of the ratio of the dimensions of $A$ and $M$
is $\r{lim}\,(3m^2+2m)/((5m^2+3m)/2)=6/5.$

Can we get a similar example with limiting ratio $5/4,$ the
ratio occurring for our $\!4\!$-dimensional $M$?
In fact we can.
The description is formally like that of the above example, but with
the basis~(\ref{d.abxy}) replaced by the slightly more complicated
basis,
\begin{equation}\begin{minipage}[c]{35pc}\label{d.abxy_alt}
$a^i\,b^j\,x,$\ \ where $0\leq i,j<2m$ and $\min(i,j)<m,$\qquad
and\qquad $a^i\,b^j\,y,$\ \ where $0\leq i,j<m.$
\end{minipage}\end{equation}
We again define $a$ and $b$ to act by adding $1$ to the relevant
exponent symbol when this leads to another element of the above basis,
while taking basis elements to zero when it does not;
and we again let $c$ and $d$ annihilate $x,$ but carry
$y$ to $a^m\,x,$ respectively, $b^m\,x,$ and act on other
basis elements as they must for our operators to commute.
We find that $\dim_k\,M=4m^2$ while $\dim_k\,A=5m^2,$
so indeed, $\r{dim}_k\,A/\r{dim}_k\,M=5/4.$

Incidentally, the $m=1$ case of both the construction
using~(\ref{d.abxy}) and the one using~(\ref{d.abxy_alt})
can be seen to be isomorphic to the $\!5\!$-dimensional
algebra of $4\times 4$ matrices with which we began this section.

\section{The recalcitrant $\!3\!$-generator question}\label{S.3-gen}

It is not known whether every $\!3\!$-generator
commutative $\!k\!$-algebra $A$ of endomorphisms of a finite-dimensional
$\!k\!$-vector-space $V$ satisfies $\r{dim}_k\,A\leq \r{dim}_k\,V.$
Let me lead into the discussion of that question by
starting with some observations applicable to any
commutative algebra $A$ of endomorphisms of a finite-dimensional vector
space $V.$
We will again write $M$ for $V$ regarded as an $\!A\!$-module.
Note that since $A$ is an algebra of endomorphisms
of $M,$ it acts faithfully on $M.$

Since $A$ is a finite-dimensional commutative $\!k\!$-algebra, it is
a direct product of local algebras, and the idempotents
arising from the decomposition of $1\in A$ yield a corresponding
decomposition of $M$ as a direct product of modules over
one or another of these factors.
Thus the question of whether $\r{dim}_k\,A$ can exceed $\r{dim}_k\,M$
reduces, in general, to the corresponding question for local algebras;
so we shall assume $A$ local in what follows.
Moreover, since passing to the algebraic closure of $k$ does
not affect the properties we are interested in, we can assume that
$k$ is algebraically closed, hence that the residue
field of the local algebra $A$ is $k$ itself.
(Having reduced our considerations to this case, we can now
drop the hypothesis that $k$ be algebraically closed, keeping
only this condition on the residue field.)
Thus, the $\!k\!$-algebra structure of $A$ is determined by that of
its maximal ideal, which, by finite-dimensionality, is nilpotent.
We shall denote this ideal $\m.$

Next note that if $M$ is cyclic as an $\!A\!$-module,
then it will be isomorphic as an $\!A\!$-module to $A$ itself.
(This is a consequence of commutativity.
If $M=A\,x,$ and some nonzero element of $A$ annihilated $x,$
then by commutativity, it would annihilate all of $M,$ contradicting
the assumption that $A$ acts faithfully.)

Note also that the dual space $M^*=\r{Hom}_k(M,\,k)$ acquires
a natural structure of $\!A\!$-module (since the vector-space
endomorphisms of $M$ given by the elements of $A$ induce
endomorphisms of $M^*),$ of the same $\!k\!$-dimension as $M.$
Using this duality, it follows from the preceding observations
that $\r{dim}_k\,A=\r{dim}_k\,M$ will also hold if $M^*$ is cyclic.
The latter condition is equivalent to saying that
the socle of $M$ (the annihilator of $\m$ in $M)$
is $\!1\!$-dimensional;
in this situation one calls $M$ ``cocyclic''.
(Dually, the condition that $M$ be cyclic is equivalent to
saying that $\m M$ has codimension $1$ in $M.)$

Thus, if $\r{dim}_k\,A$ and $\r{dim}_k\,M$ are to be distinct,
$M$ can be neither cyclic nor cocyclic.
For brevity of exposition, let us focus on the consequences of
$M$ being noncyclic.

The most simpleminded way to get a noncyclic module $M$ is to
take a direct sum of two nonzero cyclic modules.
(In doing so, we keep the assumption that $A$ acts faithfully on $M,$
though it will not in general act faithfully on these summands.)
In this situation, I claim that $A$ will have dimension {\em strictly
smaller} than that of $M.$
Indeed, writing the direct summands as $M_1\cong A/I_1$ and
$M_2\cong A/I_2$ for proper ideals $I_1,$ $I_2\subseteq A,$
we see that the $\!k\!$-dimension of $M$ is the sum
of the codimensions of $I_1$ and $I_2$ in $A,$ while the
dimension of $A$ is the codimension of the zero ideal,
which in this case is $I_1\cap I_2.$
In view of the exact sequence
$0\to A/(I_1\cap I_2)\to A/I_1\oplus A/I_2\to A/(I_1+I_2)\to 0,$
the dimension of the direct sum $M$ must exceed that of the
algebra $A$ by $\r{dim}_k\,A/(I_1+I_2),$ which is positive
because $I_1$ and $I_2$ lie in the common ideal $\m.$

Hence, if we want a module $M$ such that
\begin{equation}\begin{minipage}[c]{35pc}\label{d.A>M}
$\r{dim}_k\,A\ >\ \r{dim}_k\,M,$
\end{minipage}\end{equation}
with $M$ generated by two elements $x_1$ and $x_2,$ so that it is
a homomorphic image of $A\,x_1\oplus A\,x_2,$
then the construction of this homomorphic image must involve
additional relations, i.e., the identification of a nonzero
submodule of $A\,x_1$ with an isomorphic submodule of~$A\, x_2.$

It turns out that a single relation, i.e., the identification
of a {\em cyclic} submodule of $A\,x_1$ with an
isomorphic cyclic submodule of $A\,x_2,$ is still not enough
to get~(\ref{d.A>M}).
For let the common isomorphism class of
the cyclic submodules that we identify be that of $A/I_3.$
Then $I_1$ and $I_2$ are both contained in $I_3,$ hence so
is $I_1+I_2.$
Now we have seen that the amount by which
$\r{dim}_k\,(A\,x_1\oplus A\,x_2)$ exceeds $\r{dim}_k\,A$ is
the codimension of $I_1+I_2$ in $A;$ so setting to zero a
submodule isomorphic to $A/I_3,$ which has dimension at most
that codimension, can at best give us equality.

Thus, we need to divide out by at least a $\!2\!$-generator
submodule to get~(\ref{d.A>M}).
And indeed, the families of $\!4\!$-generator examples we obtained
in the latter half of the preceding section can be thought of as
constructed by imposing two relations, $cy=a^mx$ and $dy=b^mx,$
on a direct sum $A\,x\oplus A\,y\cong A/I_1\oplus A/I_2.$

In the remainder of this section,
I will display a few examples diagrammatically.
In these examples, $M$ will have a $\!k\!$-basis $B$ such
that each of our given generators of $A$ carries each element of $B$
either to another element of $B$ or to $0.$
The actions of the various generators of $A$ on basis elements
will be shown as downward line segments of different slopes, with
the matching of generator and slope shown to the right
of the diagram (under the word ``labeling'').
Where no line segment of a given slope descends from a given
element, this means that the corresponding
generator of $A$ annihilates that basis element.
For instance, the $\!4\!$-dimensional example with
which we began the preceding section may be diagrammed

\begin{equation}\begin{minipage}[c]{20pc}\label{d.4-gen}
\begin{picture}(200,65)
\put(0,5){
\put(5,54){$x$}
    \put(9,50){\line(1,-4){10}}
    \put(15,50){\line(3,-4){30}}
\put(56,54){$y$}
    \put(55,50){\line(-3,-4){30}}
    \put(61,50){\line(-1,-4){10}}
\put(15,2){$w$}
\put(45,2){$z$}
\put(150,0){
\put(16,45){labeling:}
\put(25,40){\line(-3,-4){15}}
    \put(00,11){$e_{14}$}
\put(30,40){\line(-1,-4){5}}
    \put(18,11){$e_{24}$}
\put(35,40){\line(1,-4){5}}
    \put(36,11){$e_{13}$}
\put(40,40){\line(3,-4){15}}
    \put(54,11){$e_{23}$}
}
}
\end{picture}
\end{minipage}\end{equation}
The fact that it can be obtained from a direct sum of two cyclic
modules $A\,x_1,$ $A\,x_2$ by two identifications is made clear in the
representation below,
where the labels on the lower vertices show the relations imposed.

\begin{equation}\begin{minipage}[c]{20pc}\label{d.4-gen2}
\begin{picture}(200,65)
\put(0,5){
\put(4,54){$x_1$}
    \put(9,50){\line(1,-4){10}}
    \put(15,50){\line(3,-4){30}}
\put(54,54){$x_2$}
    \put(55,50){\line(-3,-4){30}}
    \put(61,50){\line(-1,-4){10}}
\put(-9,2){$cx_1{=}\,ax_2$}
\put(38,2){$dx_1{=}\,bx_2$}
\put(150,0){
\put(16,45){labeling:}
\put(25,40){\line(-3,-4){15}}
    \put(04,11){$a$}
\put(30,40){\line(-1,-4){5}}
    \put(21,11){$b$}
\put(35,40){\line(1,-4){5}}
    \put(38,11){$c$}
\put(40,40){\line(3,-4){15}}
    \put(55,11){$d$}
}
}
\end{picture}
\end{minipage}\end{equation}
(In~\eqref{d.4-gen2}, I have changed the edge-labeling
from~\eqref{d.4-gen}, since that labeling, based on the use of
upper triangular matrices in the preceding section, would
have led to lower-indexed basis elements on the bottom and
higher-indexed basis elements on the top, and hence,
when the former were replaced by expressions in the latter,
a notation where everything was expressed in terms
of elements apparently arbitrarily called $x_3$ and $x_4.)$

If we drop one of the generators of the above
$A,$ getting a $\!3\!$-generator algebra,
then the same vector space becomes,
as shown in the next diagram, a module whose
submodules $A\,x_1$ and $A\,x_2$ are connected by only one relation,
and which has $\r{dim}_k\,A=\r{dim}_k\,M$
(both dimensions being $4),$ which, we have
seen, is the best one can hope for from such a ``one-relation'' module.

\begin{equation}\begin{minipage}[c]{20pc}\label{d.3-gen1}
\begin{picture}(200,65)
\put(0,5){
\put(4,54){$x_1$}
    \put(9,50){\line(1,-4){10}}
\put(54,54){$x_2$}
    \put(55,50){\line(-3,-4){30}}
    \put(61,50){\line(-1,-4){10}}
\put(-9,2){$cx_1{=}\,ax_2$}
\put(43,2){$bx_2$}
\put(150,0){
\put(13,45){labeling:}
\put(25,40){\line(-3,-4){15}}
    \put(04,11){$a$}
\put(30,40){\line(-1,-4){5}}
    \put(21,11){$b$}
\put(35,40){\line(1,-4){5}}
    \put(38,11){$c$}
}
}
\end{picture}
\end{minipage}\end{equation}

We remark that we still get equality of dimensions if, in the
above example, we replace one or more of the $\!1\!$-step paths by
multistep paths repeating the algebra generator in question; e.g.,

\begin{equation}\begin{minipage}[c]{20pc}\label{d.3-gen2}
\begin{picture}(200,65)
\put(0,5){
\put(3,54){$x_1$}
   \put(9,50){\line(1,-4){10}
   \put(-6.7,-13.2){\circle*{1.7}}
   \put(-3.3,-26.4){\circle*{1.7}}}
\put(54,54){$x_2$}
   \put(61,50){\line(-1,-4){10}
   \put(5.0,-10){\circle*{1.7}}
   \put(2.5,-20){\circle*{1.7}}
   \put(0.0,-30){\circle*{1.7}}}
   \put(55,50){\line(-3,-4){30}
   \put(7.5,-20){\circle*{1.7}}}
\put(-14,0){$c^3x_1{=}\,a^2x_2$}
\put(43,0){$b^4x_2$}
\put(150,0){
\put(13,45){labeling:}
\put(25,40){\line(-3,-4){15}}
    \put(04,11){$a$}
\put(30,40){\line(-1,-4){5}}
    \put(21,11){$b$}
\put(35,40){\line(1,-4){5}}
    \put(38,11){$c$}
}
}
\end{picture}
\end{minipage}\end{equation}
which has $\r{dim}_k\,A=\r{dim}_k\,M=10.$

I have attempted to find examples of $\!3\!$-generator
algebras $A$ such that $\r{dim}_k\,A>\r{dim}_k\,M,$ by connecting
two or more
cyclic modules with the help of two or more relations, but without
success; the best my fiddling with such examples has achieved
is to get more examples of equality; for instance,

\begin{equation}\begin{minipage}[c]{20pc}\label{d.3-gen<>}
\begin{picture}(200,70)
\put(00,05){
\put(30,60){$x_1$}
\put(60,60){$x_2$}
\put(00,30){$y_0$}
\put(30,30){$y_1$}
\put(60,30){$y_2$}
\put(90,30){$y_0$}
\put(30,00){$z_1$}
\put(60,00){$z_2$}
\multiput(30,56)(30,0){2}{\multiput(0,0)(30,-30){2}{\line(-1,-1){19}}}
\multiput(40,56)(30,0){2}{\multiput(0,0)(-30,-30){2}{\line(1,-1){19}}}
\multiput(35,56)(30,0){2}{\multiput(0,0)(0,-30){2}{\line(0,-1){19}}}
}
\put(160,0){
\put(13,55){labeling:}
\put(20,50){\line(-1,-1){15}}
    \put(-1,25){$a$}
\put(28,50){\line(0,-4){15}}
    \put(25,25){$b$}
\put(36,50){\line(1,-1){15}}
    \put(50,25){$c$}
}
\end{picture}
\end{minipage}\end{equation}
Note the repetition of $y_0$ at the right end of the middle row;
thus, $y_0$ is both $a\,x_1$ and $c\,x_2.$
We find that the distinct nonzero monomials in $A$ are
\begin{equation}\begin{minipage}[c]{25pc}\label{d.dim7}
$1;$ $a,$ $b,$ $c;$ $a^2=bc,$ $b^2=ac,$ $c^2=ab,$
\end{minipage}\end{equation}
which are linearly independent,
so that $A,$ like $M,$ is $\!7\!$-dimensional.
(One must also verify commutativity of $A.$
This is fairly easy;
there are three relations to be checked, $ab=ba,$
$ac=ca$ and $bc=cb,$ and these need only be checked on
$x_1$ and $x_2,$ since on all other basis elements, both sides
of each equation clearly give $0.)$

One can modify this example by making the rightmost basis-element
in the middle row be, not a repetition of $y_0=ax_1,$ but
$a^iy_0=a^{i+1}x_1,$ for any $i>0.$
This adds exactly $i$ basis elements to $M,$
namely $a\,y_0,\dots,a^iy_0,$ and likewise adds $i$
monomials to the basis of $A,$ namely $a^3,\dots,a^{i+2}$ (with
$a^{i+2}$ rather than $a^2$ now coinciding with $bc).$
In the new $A,$ the relation $c^2=ab$ no longer holds;
rather, $c^2=0,$ but $ab$ remains nonzero.

We see that this modified example still satisfies
$\r{dim}_k A=\r{dim}_k M.$
In fact, Kevin O'Meara has pointed out to me that by
\cite[Theorem~5.5.8]{O+C+V},
if a $\!k\!$-algebra $A$ of endomorphisms of
a $\!k\!$-vector-space $M$ generated by three commuting
elements $a,$ $b,$ $c$ is to satisfy $\r{dim}_k\,A>\r{dim}_k\,M,$ then
$M$ must require at least $4$ generators as a module over the
subring $k[a].$
(The wording of that theorem is that $\r{dim}_k\,A\leq\r{dim}_k\,M$
holds for ``$\!3\!$-regular'' matrix algebras, i.e., those
where $M$ can be so generated by $3$ elements.)
% Thus, the same applies when one looks at $M$ as a
% module over $k[b]$ or $k[c].$
The example of~(\ref{d.3-gen<>}), and the variant
just noted, are generated over $k[a]$ by $\{x_1,\,x_2,\,y_2\}$
confirming that we would need something more
complicated to get a counterexample.
O'Meara suspects that the analog of the theorem just quoted
also holds for
$\!4\!$-regular algebras, but might fail in the $\!5\!$-regular case.
(Cf.~\cite[p.226, footnote~12]{O+C+V}.)
Incidentally,~(\ref{d.3-gen2}) is an example
of a module that is not $\!3\!$-generated over any of
$k[a],$ $k[b],$ $k[c],$ but which still
satisfies $\r{dim}_k A=\r{dim}_k M.$

We remark, for the benefit of the reader who wants to
explore examples using diagrams like the above, that
a consequence of our requirement of commutativity
is that wherever the diagram shows distinct generators of $A$
coming ``into'' and ``out of'' a vertex, e.g.,
\begin{picture}(18,16)
\put(5,15){\line(1,-1){10}}
\put(15,5){\line(-1,-1){10}}
\end{picture},
these must be part of a parallelogram
\begin{picture}(24,16)
\put(12,15){\line(1,-1){10}}
\put(22,5){\line(-1,-1){10}}
\put(12,15){\line(-1,-1){10}}
\put(2,5){\line(1,-1){10}}
\end{picture}.
So, for instance, if we tried to improve on~(\ref{d.3-gen<>}) by
deleting the line segment from $y_2$ to $z_2,$ so that~$b^2,$
though still nonzero because of its action on $x_1,$
ceased to equal $ac,$ thus increasing $\r{dim}_k\,A$
by $1,$ the resulting algebra would
not be commutative, because the configuration consisting
of $x_1,$ $y_1$ and $z_2$ and their connecting line segments
would not be part of a parallelogram.
(On the other hand, instances of
\begin{picture}(30,8)
\put(5,8){\line(1,-1){10}}
\put(15,-2){\line(1,1){10}}
\end{picture}
or of
\begin{picture}(30,8)
\put(5,-2){\line(1,1){10}}
\put(15,8){\line(1,-1){10}}
\end{picture}
do not need to belong to parallelograms, as illustrated
by $x_1,$ $x_2,$ $y_2$ in~\eqref{d.3-gen<>}.)

Our diagrammatic notation also allows us to
illustrate the fact mentioned in \S\ref{S.digr}, that a
faithful module over a finite-length homomorphic image
$A$ of $k[s,t]$ need not contain an isomorphic copy
of $A$ as a $\!k[s,t]\!$-module, though it will as a $\!k[s]\!$-module.
Let $A=k[s,t]/(s,t)^2,$ which has diagram
\begin{picture}(30,8)
\put(5,-2){\line(1,1){10}}
\put(15,8){\line(1,-1){10}}
\end{picture}.
Let $M$ be the dual module $\r{Hom}_k(A,k).$
This is faithful, but has diagram
\begin{picture}(30,8)
\put(5,8){\line(1,-1){10}}
\put(15,-2){\line(1,1){10}}
\end{picture},
which does not contain a copy of the diagram of $A.$
However, the diagrams for $A$ and $M$ as
$\!k[s]\!$-modules are
\begin{picture}(30,8)
\put(5,-2){\line(1,1){10}}
\put(25,-2){\circle*{1},}
\put(24.5,-1.5){\circle*{1},}
\end{picture}
and
\begin{picture}(30,8)
\put(5,8){\circle*{1}}
\put(5.5,7.5){\circle*{1}}
\put(15,-2){\line(1,1){10}}
\end{picture},
which are isomorphic.

\section{Some questions, and steps toward their answer}\label{S.genlz}

Theorem~\ref{T.Rt} has the unsatisfying feature that our $R$ has
absorbed one of the indeterminates
of the original algebra $k[s,\,t],$ but not the other.
We may ask, without referring to indeterminates,

\begin{question}\label{Q.S}
For which commutative rings $S$ does the statement,
\begin{equation}\begin{minipage}[c]{35pc}\label{d.Q.S}
For every $\!S\!$-module $M$ of finite length,
if we let $A=S/\r{Ann}_S\,M,$ then\\
$\lt_S(A)\leq \lt_S(M),$
\end{minipage}\end{equation}
hold?
\end{question}

Theorem~\ref{T.Rt} says roughly that the class of such rings
includes the rings $R[t]$ where $R$ is a principal ideal domain.
A plausible generalization would be that it contains all rings $S$ such
that every maximal ideal $\m$ of $S$ satisfies $\lt(\m/\m^2)\leq 2.$
If, in fact, Gerstenhaber's result turns out to go over to
$\!3\!$-generator algebras of commuting matrices,
we can hope that~(\ref{d.Q.S}) even holds for all $S$
whose maximal ideals satisfy $\lt(\m/\m^2)\leq 3.$

(There is a slight difficulty with
regarding~(\ref{d.Q.S}) as a generalization of the
property of Theorem~\ref{T.Rt}.
When $S=R[t],$ Theorem~\ref{T.Rt} concerns
$\!S\!$-modules of finite length over $R,$
while~(\ref{d.Q.S}) concerns $\!S\!$-modules of finite length over $S,$
and these are not always the same.
For instance, if $R$ is a discrete valuation ring with maximal
ideal $(p),$ then the $\!R[t]\!$-module $R[t]/(pt-1)$
has length~$1$ as an $\!R[t]\!$-module,
since the ring $R[t]/(pt-1)$ is a field, but has infinite
length as an $\!R\!$-module.
% This is similar to the point noted earlier, on the distinction
% between dimension over $k$ and length over $R.$
Since ``length over $R$'' has no meaning for a module
over a ring $S$ that is not assumed to be built from a subring $R,$
we shall take condition~(\ref{d.Q.S}) as our focus from here on.)

Note that a commutative ring $S$
satisfies~(\ref{d.Q.S}) if and only if all of its
finite-length homomorphic images $A$ do; equivalently,
if and only if all those images have the
stated property for {\em faithful} $\!A\!$-modules $M.$
Now for a ring, being of finite length is equivalent to being Artinian,
and every commutative Artinian ring is a finite
direct product of local rings.
This leads to the modified question,

\begin{question}\label{Q.A}
For which commutative Artinian local rings $A$ does the statement
\begin{equation}\begin{minipage}[c]{35pc}\label{d.Q.A}
Every faithful $\!A\!$-module $M$ satisfies $\lt_A(M)\geq\lt_A(A)$
\end{minipage}\end{equation}
hold?
\end{question}

We can get further mileage on these questions by combining
Theorem~\ref{T.Rt} with some theorems of I.\,S.\,Cohen~\cite{ISC}.
(Note to the reader of that paper: a
``local ring'' there means what is now called a Noetherian local ring.
Since the local rings we apply Cohen's results to will
be Artinian, this will be no problem to us.
Incidentally, Cohen defines a {\em generalized} local ring to mean
what we would call a (not necessarily Noetherian)
local ring whose maximal ideal $\m$ is
finitely generated and satisfies $\bigcap\m^n=\{0\},$
and he comments that he does not know whether every such ring is
``local'', i.e., is also Noetherian.
This has been answered in the negative~\cite{WH+MR}.)

Recall that a local ring $A$ with maximal ideal $\m$
is said to be {\em equicharacteristic}
if the characteristics of $A$ and $A/\m$ are the same.
This is equivalent to saying that $A$ contains a field.
(The implication from ``contains a field''
to ``equicharacteristic'' is clear.
Conversely, note that since $A/\m$ is a field, its characteristic
is $0$ or a prime number $p.$
In the former case, every member of $\Z-\{0\}$ is invertible
in $A/\m,$ and hence in $A,$ so $A$ contains the field $\Q;$
while in the latter, if $A$ is equicharacteristic, then,
like $A/\m,$ it has characteristic $p,$
and so contains the field $\Z/(p).)$

Cohen shows in~\cite[Theorem~9, p.\,72]{ISC} that a complete
Noetherian local ring which is
equicharacteristic is a homomorphic image of the ring of formal
power series in $\lt(\m/\m^2)$ indeterminates over a field,
where $\m$ is the maximal ideal of the ring.
Using this, we can get

\begin{proposition}\label{P.equichar}
Suppose $A$ is a commutative local Artinian ring with maximal
ideal $\m,$ and that $\lt(\m/\m^2)\leq 2.$
Then if $A$ is equicharacteristic, it satisfies~\textup{(\ref{d.Q.A})}.

Hence, if $S$ is a commutative ring such that
every maximal ideal $\m\subseteq S$ satisfies $\lt(\m/\m^2)\leq 2,$
and $S$ contains a field, then $S$ satisfies \textup{(\ref{d.Q.S})}.
\end{proposition}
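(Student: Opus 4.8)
The plan is to combine Cohen's structure theorem with Theorem~\ref{T.Rt}, letting one of the two indeterminates that Cohen's theorem furnishes play the role of the principal ideal domain in that theorem while the other supplies the single endomorphism to which it is applied. I would begin with the preliminaries: a commutative Artinian local ring is Noetherian and has nilpotent maximal ideal, hence is complete in the topology defined by that ideal; being equicharacteristic by hypothesis, $A$ is, by Cohen's Theorem~9, a homomorphic image $k'[[x_1,x_2]]/J$, where $k'$ may be taken to be a coefficient field isomorphic to $A/\m$ and the number of indeterminates is $\lt(\m/\m^2)\le 2$ (if this number is $0$ or $1$, one or both of the $x_i$ are simply redundant, which does no harm). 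Since $\m$ is nilpotent, $A$ is generated over $k'$ as a ring by the images $\bar x_1,\bar x_2$; writing $R=k'[x_1]$ for the honest polynomial ring, a principal ideal domain, and letting $R$ act on $A$ and on each $\!A\!$-module through $R\to k'[[x_1,x_2]]\to A$, this says precisely that $A=R[\bar x_2]$ as an $\!R\!$-algebra.

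Next, given a faithful $\!A\!$-module $M$, I would first reduce to the case $\lt_A(M)<\infty$, the inequality being otherwise vacuous. Since $A/\m=k'$ is the only simple $\!A\!$-module, this gives $\dim_{k'}M=\lt_A(M)<\infty$, and likewise $\dim_{k'}A=\lt_A(A)$; in particular $M$, being finite-dimensional over $k'\subseteq R$, has finite length over $R$. The action of $\bar x_2$ on $M$ is an $\!R\!$-linear endomorphism $f$ (because $A$ is commutative and contains the image of $R$), and, $M$ being faithful over $A=R[\bar x_2]$, the natural ring map $A\to\r{End}_R(M)$ is injective with image the unital $\!R\!$-subalgebra generated by $f$. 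Theorem~\ref{T.Rt} then yields $\lt_R(A)\le\lt_R(M)$, with $A$ viewed as an $\!R\!$-module via this isomorphism onto its image. To convert these lengths back, note that $\bar x_1\in\m$ is nilpotent, so $x_1$ acts nilpotently on $M$ and on $A$, making each a module over $k'[x_1]/(x_1^N)$ for a suitable $N$; hence every $\!R\!$-composition factor of $M$ or of $A$ is isomorphic to $R/(x_1)=k'$, so that $\lt_R(M)=\dim_{k'}M=\lt_A(M)$ and $\lt_R(A)=\dim_{k'}A=\lt_A(A)$. Combining, $\lt_A(A)\le\lt_A(M)$, which is~(\ref{d.Q.A}).

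For the final assertion, I would invoke the reductions recorded just before Question~\ref{Q.A}: $S$ satisfies~(\ref{d.Q.S}) exactly when every finite-length (hence Artinian) homomorphic image of $S$ has property~(\ref{d.Q.A}) for faithful modules over it, and, such an image being a finite product of local Artinian rings with both sides of~(\ref{d.Q.A}) additive over that product, it suffices to treat one local factor $A$. Such an $A$ is a local Artinian homomorphic image of $S$; its maximal ideal $\m$ is the image of some maximal ideal $\mathfrak{M}$ of $S$, so $\m/\m^2$ is a homomorphic image of $\mathfrak{M}/\mathfrak{M}^2$ and hence $\lt(\m/\m^2)\le\lt(\mathfrak{M}/\mathfrak{M}^2)\le 2$; and since $A\ne 0$ the unital ring map into $A$ from the field $F\subseteq S$ is injective, so $A$ contains a field and is therefore equicharacteristic. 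The first part of the proposition then applies to $A$, and the argument is complete.

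The step I expect to need the most care is the passage among the three notions of size $\lt_R$, $\dim_{k'}$, and $\lt_A$; the whole argument rests on the first Cohen indeterminate $x_1$ acting nilpotently, which is what forces every relevant $\!R\!$-composition factor to be the one-dimensional module $k'$ and thereby collapses $\lt_R$ onto $\dim_{k'}$ onto $\lt_A$. By comparison, verifying that faithfulness presents $A$ as a one-generator $\!R\!$-subalgebra of $\r{End}_R(M)$, and that the two hypotheses on $S$ descend to its local homomorphic images, is routine bookkeeping.
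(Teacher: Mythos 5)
Your proof is correct, and its skeleton is the same as the paper's: Artinian local $\Rightarrow$ complete, Cohen's Theorem~9 $\Rightarrow$ image of $k'[[x_1,x_2]]$, nilpotence of $\m$ $\Rightarrow$ image of the polynomial ring, then the two-commuting-generators bound. The one genuine difference is the last step: the paper simply cites ``the result of Gerstenhaber with which we started,'' whereas you apply Theorem~\ref{T.Rt} directly with $R=k'[x_1]$ and $f$ the action of $\bar x_2$. Your version is slightly more self-contained and, pleasantly, avoids the extension-of-scalars detour the paper needed in \S\ref{S.2-gens} to convert between lengths over $R$ and $\!k\!$-dimensions: since $\bar x_1$ is nilpotent, every $\!R\!$-composition factor of $M$ and of $A$ is $R/(x_1)\cong k'\cong A/\m,$ so $\lt_R,$ $\dim_{k'},$ and $\lt_A$ all coincide, and the comparison is immediate. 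The reduction of the second assertion to local finite-length factor rings matches the discussion preceding Question~\ref{Q.A}.
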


\begin{proof}
We shall prove the first assertion.
Clearly, the second will then
follow by applying the first to local factor-rings of $S.$

Since the local ring $A$ is Artinian, it is complete,
so by the result of Cohen's cited, it
is a homomorphic image of a formal power series ring
in $\leq 2$ indeterminates over a field.
But a finite-length homomorphic image of a formal power series
ring is an image of the corresponding polynomial ring.
% REFERENCE?
Hence by the result of Gerstenhaber with which we started,
% or by its generalization, Theorem~\ref{T.Rt}
$A$ satisfies~(\ref{d.Q.A}).
\end{proof}

Cohen's result for mixed characteristic is
\cite[Theorem~12, p.\,84]{ISC}.
The case we shall use, that of the last sentence of that theorem,
says that if $A$ is a complete Noetherian local ring
whose residue field $A/\m$ has characteristic $p$
(i.e., such that $p\in\m),$ but such that $p\notin\m^2,$ then $A$
can be written as a homomorphic image of a formal power series
ring in $\lt(\m/\m^2)-1$ indeterminates over a complete discrete
valuation ring $V$ in which $p$ has valuation~$1.$
(Intuitively, $p$ takes the place of one of the
indeterminates in the result for the equicharacteristic case.)
This gives us

\begin{proposition}\label{P.mixed}
Again let $A$ be a commutative local Artinian ring with maximal
ideal $\m,$ such that $\lt(\m/\m^2)\leq 2.$
If $p\in\m$ but
$p\notin\m^2,$ then $A$ satisfies~\textup{(\ref{d.Q.A})}.

Hence, if $S$ is a commutative ring such that every maximal ideal
$\m\subseteq S$ satisfies $\lt(\m/\m^2)\leq 2,$
and such that no prime $p\in\Z$ belongs to the square
of any maximal ideal of $S,$ then $S$ satisfies \textup{(\ref{d.Q.S})}.
\end{proposition}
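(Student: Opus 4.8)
The plan is to imitate the proof of Proposition~\ref{P.equichar}, using the result of Cohen for mixed characteristic quoted above in place of the equicharacteristic one, and Theorem~\ref{T.Rt} in place of Gerstenhaber's result.

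To prove the first assertion, note that if no prime lies in $\m\setminus\m^2$ there is nothing to prove, so assume $p\in\m\setminus\m^2$ is prime; then $A/\m$ has characteristic~$p.$ Being Artinian local, $A$ is complete Noetherian local, so by the cited result of Cohen it is a homomorphic image of a formal power series ring $V[[t_1,\dots,t_r]]$ with $r=\lt(\m/\m^2)-1\leq 1$ and $V$ a complete discrete valuation ring in which $p$ has valuation~$1;$ in particular $V$ is a principal ideal domain with maximal ideal~$pV.$ Since $A$ has finite length (hence nilpotent maximal ideal), it is in fact a homomorphic image of $V[t_1,\dots,t_r],$ just as in the proof of Proposition~\ref{P.equichar}; and since $r\leq 1,$ this exhibits $A$ as a homomorphic image of $V[t].$ Now let $M$ be a faithful $A$-module of finite length. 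The field $A/\m$ is a quotient of $V[t]$ by a maximal ideal containing $p,$ hence of the form $(V/pV)[t]/(g)$ with $g$ irreducible over the field $V/pV;$ so $e:=[A/\m:V/pV]<\infty,$ and consequently for every $A$-module $N$ one has $\lt_V(N)=e\cdot\lt_A(N)$ (each composition factor $A/\m$ of $N,$ being annihilated by $p,$ has $V$-length $e$). In particular $M$ has finite length over~$V.$ Since $A$ acts faithfully on $M$ by $A$-linear (hence $V$-linear) endomorphisms, and is generated as a $V$-algebra by the single endomorphism $f$ given by the image of $t,$ Theorem~\ref{T.Rt} applied with the principal ideal domain $V$ gives $\lt_V(A)\leq\lt_V(M);$ dividing by $e$ yields $\lt_A(A)\leq\lt_A(M),$ which is~\textup{(\ref{d.Q.A})}.

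For the ``Hence'' clause, it suffices, as in the proof of Proposition~\ref{P.equichar} and by the remarks preceding Question~\ref{Q.A}, to show that every local Artinian homomorphic image $A=S/J$ of $S$ satisfies~\textup{(\ref{d.Q.A})}. Let $\m$ be the unique maximal ideal of $S$ containing $J,$ so that the maximal ideal of $A$ is $\m_A:=\m/J,$ and note $\lt(\m_A/\m_A^2)\leq\lt(\m/\m^2)\leq 2.$ If $A/\m_A$ has characteristic $0,$ then $A$ is equicharacteristic and Proposition~\ref{P.equichar} applies. Otherwise $A/\m_A$ has prime characteristic $p,$ so $p\in\m_A$ and $p\in\m$ in $S.$ If $p\notin\m_A^2,$ the first assertion applies. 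Finally, if $p\in\m_A^2,$ write $p=m'+j$ with $m'\in\m^2$ and $j\in J\subseteq\m;$ since $p\notin\m^2$ by the hypothesis on $S,$ we get $j\in\m\setminus\m^2,$ while $j$ maps to $0$ in $\m_A/\m_A^2,$ so the kernel of $\m/\m^2\to\m_A/\m_A^2$ is nonzero and $\lt(\m_A/\m_A^2)\leq\lt(\m/\m^2)-1\leq 1.$ Hence $\m_A$ is principal, say $\m_A=(\theta),$ and $A$ is uniserial, with composition series $A\supset(\theta)\supset\dots\supset(\theta^\ell)=0$ of length $\ell=\lt_A(A).$ For a faithful $A$-module $M,$ faithfulness forces $\theta^{\ell-1}M\neq 0;$ picking $x$ with $\theta^{\ell-1}x\neq 0,$ and noting that the annihilator of $x$ is a power of $\m_A,$ that annihilator must be $0,$ so $Ax\cong A$ and $\lt_A(M)\geq\lt_A(A).$

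I expect the last case to be where the real work lies: Cohen's theorem as quoted covers only $p\notin\m_A^2,$ and the condition ``no prime lies in the square of a maximal ideal'' need not pass to homomorphic images, so one must notice that, under the standing hypothesis $\lt(\m/\m^2)\leq 2,$ its failure for a quotient $A$ forces $\m_A$ to be principal, i.e.\ $A$ uniserial, and then dispose of that case by hand. The remaining ingredients, namely that a finite-length $A$-module has finite length over $V$ and the comparison of $\lt_A$ with $\lt_V$ through the finite factor $e=[A/\m:V/pV],$ are routine, the finiteness of $e$ coming from the fact that a residue field of a polynomial ring over a field is a finite extension of that field.
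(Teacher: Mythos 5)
Your proof is correct, and on the first assertion it follows exactly the paper's route: Cohen's mixed-characteristic structure theorem presents $A$ as an image of $V[[t_1,\dots,t_r]]$ with $r=\lt(\m/\m^2)-1\leq 1$, nilpotence of $\m$ lets one replace the power series ring by $V[t]$, and Theorem~\ref{T.Rt} with $R=V$ finishes. The bookkeeping you add --- that a finite-length $A$-module has finite $V$-length, and that $\lt_V=e\cdot\lt_A$ with $e=[A/\m:V/pV]$, so the $V$-length inequality of Theorem~\ref{T.Rt} descends to the $A$-length inequality of~(\ref{d.Q.A}) --- is exactly the point the paper leaves implicit, and it is worth making explicit.

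Where you genuinely depart from the paper is in the deduction of the second assertion, and you have put your finger on a real gap in the paper's one-line argument (``apply the first assertion to local factor rings whose residue fields have prime characteristic''): the hypothesis that no prime lies in $\m^2$ for maximal ideals $\m$ of $S$ does \emph{not} automatically pass to a local Artinian quotient $A=S/J$, since $p$ may land in $\m_A^2=(\m^2+J)/J$ without lying in $\m^2$. Your observation that in that event some $j\in J$ lies in $\m\setminus\m^2$, forcing $\lt(\m_A/\m_A^2)\leq 1$, and your direct disposal of that case ($\m_A$ principal, $A$ uniserial, a faithful module contains a copy of $A$) correctly repair the argument. The paper does cover the residual case, but only in the paragraph \emph{after} the proposition, where it notes that any $A$ with $\lt(\m/\m^2)\leq 1$ satisfies~(\ref{d.Q.A}) via the unrestricted form of Cohen's theorem; your uniserial argument reaches the same conclusion more elementarily and keeps the proof self-contained.
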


\begin{proof}
We will prove the first assertion.
The second will then follow by applying that assertion to
local factor rings whose residue fields
have prime characteristic, while applying the first
assertion of Proposition~\ref{P.equichar} to local factor rings
whose residue fields have characteristic zero.

In the situation of the first assertion,
the result of Cohen cited, again combined with the observation
that a finite-length homomorphic image of a formal power series
ring is a homomorphic image of the corresponding
polynomial ring, tells us that $A$ is a homomorphic image
of a polynomial ring
in at most one indeterminate over a discrete valuation ring $V.$
Hence by Theorem~\ref{T.Rt} above, $A$ satisfies~(\ref{d.Q.A}).
\end{proof}

If, in the mixed-characteristic case, we instead have $p\in\m^2,$
Cohen's result only
tells us that $A$ is a homomorphic image of a formal power
series ring in $\lt(\m/\m^2)$ (rather than
$\lt(\m/\m^2)-\nolinebreak 1)$ indeterminates over a complete
discrete valuation ring $V;$
so in our case, $A$ is a homomorphic image of $V[[s,\,t]].$
In general, this is not enough to give us the conclusion
we want, but there are cases where it is.
Let $d$ be the integer such that $p\in\m^d-\m^{d+1},$
and suppose that
\begin{equation}\begin{minipage}[c]{35pc}\label{d.p=q^d}
$p$ has a $\!d\!$-th root $q$ in $A.$
\end{minipage}\end{equation}
Then this $\!d\!$-th root $q$ will lie in $\m-\m^2,$
so via a change of variables, the indeterminate
$s$ in $V[[s,\,t]]$ may be taken to be an element
that maps to $q\in A.$
Thus, $A$ is a homomorphic image of $V[[s,\,t]]/(s^d-p);$
so using, as before, the fact that $A$ has finite length, we see that
$A$ is in fact a homomorphic image of $V[s,\,t]/(s^d-p).$
But $V[s]/(s^d-p)$ is a discrete valuation ring $V'\supseteq V,$
so $A$ is a homomorphic image of $V'[t],$ and we can again
conclude from Theorem~\ref{T.Rt} that it satisfies~(\ref{d.Q.A}).

Can we generalize this further?
It might seem harmless to weaken~(\ref{d.p=q^d})
to say that some {\em associate} of $p$ in $A$
has a $\!d\!$-th root $q\in A.$
But then the problem arises of
where the unit of $A$ that carries $p$ to $q^d$ lies.
If it does not belong to the image of $V,$ we can't use
it in constructing our extension $V'.$
We might hope to incorporate the condition
that {}that unit lie in the image of $V$ into
a generalization of condition~(\ref{d.p=q^d}); but a
version of Proposition~\ref{P.mixed} based on such a condition
would be awkward to formulate, since the
$V$ given by Cohen's result is not part of the hypothesis of
Proposition~\ref{P.mixed}.
One assumption that {\em will} clearly guarantee that we can
argue as suggested is that the unit in question lie in the
image of $\Z$ in $A.$
I will not try here to find the ``best'' result of this sort.

If we don't assume any condition like~(\ref{d.p=q^d}),
there are examples where $A$ indeed
cannot be generated by one element over
a homomorphic image of a discrete valuation ring.
For instance, let $p$ be any prime, and within
$\Z\,[\,p^{1/5}],$ let us take the subring
$\Z\,[\,p^{2/5},\,p^{3/5}]$ and divide out by the ideal $(p^2),$
writing
\begin{equation}\begin{minipage}[c]{35pc}\label{d.2/5,3/5}
$A\ =\ \Z\,[\,p^{2/5},\,p^{3/5}]/(p^2).$
\end{minipage}\end{equation}

We see that $A$ is local and Artinian, with
maximal ideal $\m$ generated by $\{\,p^{2/5},\,p^{3/5},\,p\};$
and since the last of these elements is the product of the
first two, $\m$ is in fact $\!2\!$-generated, and $\m/\m^2$
can be seen to have length~$2.$
But I claim that $A$ is not $\!1\!$-generated over
a homomorphic image $B$ of a valuation ring $V.$
Roughly speaking, if it were, then that
subring $B\subseteq A$ would either have to have
the property that all its elements
are associates of powers of $p^{2/5},$
or that they are associates of powers of $p^{3/5};$
but $p\in B$ cannot be either.

Nevertheless, I would be surprised if the ring~(\ref{d.2/5,3/5})
did not satisfy~(\ref{d.Q.A}).
Any way I can think of to construct a candidate counterexample
could be duplicated over $k[s^2,\,s^3]/(s^{10})$ for $k$ a field,
though we know that no counterexample exists in that case
by Gerstenhaber's original result.

We can in fact show that for {\em all but finitely many} primes~$p,$
the ring~(\ref{d.2/5,3/5}) does satisfy~(\ref{d.Q.A}).
For suppose we had counterexamples for an infinite set $P$ of primes.
Let us write $A_p$ $(\,p\in P)$ for the corresponding
rings~(\ref{d.2/5,3/5}), and choose for each $p\in P$ an
$\!A_p\!$-module $M_p$ witnessing the failure of~(\ref{d.Q.A}).
Now let $A$ be an ultraproduct of the $A_p$ with
respect to a nonprincipal ultrafilter on $P,$ and $M$
the corresponding ultraproduct of the $M_p,$ an $\!A\!$-module.
From the fact that the $A_p$ all have the same length (namely $10),$
one can verify that $A$ will also have that length, hence be Artinian,
and from the fact that $\lt\m/\m^2=2$ for all $A_p,$ one finds
that the same is true for $A.$
Moreover, the characteristic of $A/\m$ will be $0,$ because every
prime integer is invertible in all but one of the $A_p/\m_p;$
hence $A$ is necessarily equicharacteristic.
The ultraproduct $M$ will be a faithful $\!A\!$-module, and
since by assumption all the $M_p$ have lengths less than the common
length of the $A_p,$ the module $M$ will also have
length less than that common value.
Hence $M$ witnesses the failure of~(\ref{d.Q.A}) for $A,$
contradicting Proposition~\ref{P.equichar};
so there cannot be such an infinite set $P$ of primes.

We see that the above method of reasoning in fact gives

\begin{proposition}\label{P.ultra}
For every positive integer $n,$ there are at most finitely
many primes $p$ for which there exist commutative local
Artinian rings $A$ of length $n$ and characteristic
a power of $p$ which satisfy $\lt(\m/\m^2)\leq 2,$ but fail to
satisfy\textup{~(\ref{d.Q.A}).}\qed
\end{proposition}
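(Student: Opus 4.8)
The plan is to carry out, in the abstract, the ultraproduct argument given just above for the rings~(\ref{d.2/5,3/5}). The essential point is that, once $n$ is fixed, every ring- and module-theoretic property in play can be captured by a single first-order sentence, so that the {\L}o\'s ultraproduct theorem transfers it to an ultraproduct.

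So suppose, for a given $n$, that the conclusion fails: there is an infinite set $P$ of primes such that for each $p\in P$ there is a commutative local Artinian ring $A_p$ of length $n$ and characteristic a power of $p$, with $\lt(\m_p/\m_p^2)\le 2$, admitting a faithful $\!A_p\!$-module $M_p$ with $\lt_{A_p}(M_p)<n$; note that then $M_p$ has finite length, and in fact $\lt_{A_p}(M_p)\le n-1$. Regard each $(A_p,M_p)$ as a two-sorted structure (a ring sort, a module sort, and the scalar action), fix a nonprincipal ultrafilter $\mathcal U$ on $P$, and form the ultraproduct $(A,M)=\prod_{\mathcal U}(A_p,M_p)$; here $A$ is a commutative ring and $M$ an $\!A\!$-module.

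Next I would check that $(A,M)$ retains all the needed features, each being first-order. ``Being local (and nonzero)'' is first-order: the non-units are closed under addition, and $1\ne 0$. Then $\m$ is definable as the set of non-units, and $\m^2$ as the set of sums of boundedly many products of pairs of elements of $\m$, the bound depending only on $n$ since $\m$ is generated by at most $n-1$ elements. ``$\lt_A(A)\ge m$'' says: there exist $y_1,\dots,y_m$ with $y_1\ne 0$ such that $y_j$ does not lie in the submodule generated by $y_1,\dots,y_{j-1}$ for each $j\ge 2$; this is first-order, so ``$\lt_A(A)=n$'' is; the same device applied to the $\!A/\m\!$-module $\m/\m^2$ makes ``$\lt(\m/\m^2)\le 2$'' first-order, and applied to $M$ makes ``$\lt_A(M)\le n-1$'' first-order, while ``$M$ is faithful'' is plainly first-order. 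By {\L}o\'s's theorem, $A$ is therefore commutative local Artinian of length $n$ with $\lt(\m/\m^2)\le 2$, and $M$ is a faithful $\!A\!$-module with $\lt_A(M)\le n-1<\lt_A(A)$. Finally, since each $A_p$ has characteristic a power of $p$, every prime integer $\ell$ is invertible in $A_p$ for all $p\in P$ with $p\ne\ell$, hence on a cofinite, so $\mathcal U$-large, subset of $P$; thus every prime $\ell$ is invertible in $A$, forcing both $A$ and $A/\m$ to have characteristic $0$, i.e.\ $A$ to be equicharacteristic. But then Proposition~\ref{P.equichar} asserts that $A$ satisfies~(\ref{d.Q.A}), contradicting the existence of $M$. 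Hence no such infinite $P$ exists, which is the claim.

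The routine but mildly delicate part is the middle step: writing down explicit first-order sentences with the correct quantifier bounds for length, for $\dim_{A/\m}(\m/\m^2)$, and for the failure of~(\ref{d.Q.A}), and confirming that the ultraproduct of the module structures $M_p$ over $A_p$ is the natural $\!A\!$-module structure on $\prod_{\mathcal U}M_p$. I do not expect any serious obstacle beyond this bookkeeping. It is worth noting where the hypotheses are used: fixing $n$ is essential, since different $n$ give different first-order theories and the quantifier bounds depend on $n$; and ``characteristic a power of $p$'' is exactly what makes the cofiniteness argument yield that the ultraproduct is equicharacteristic, which is the one input needed to invoke Proposition~\ref{P.equichar}.
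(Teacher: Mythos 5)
Your proposal is correct and takes essentially the same route as the paper: the paper proves Proposition~\ref{P.ultra} by exactly the ultraproduct argument you describe, generalizing the computation it carries out explicitly for the rings~(\ref{d.2/5,3/5}), with the same key points (transfer of length~$n,$ of $\lt(\m/\m^2)\leq 2,$ and of the faithful small module to the ultraproduct, plus the observation that over a nonprincipal ultrafilter every prime becomes invertible, so the ultraproduct is equicharacteristic and Proposition~\ref{P.equichar} applies). Your added first-order bookkeeping via {\L}o\'s's theorem is a correct way to make precise what the paper leaves as ``one can verify.''
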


Above, we have, for brevity, been focusing on the more
challenging aspects of our problem.
One can also formally extend our results in more trivial ways.
For instance, using the case of Cohen's \cite[Theorem~12, p.\,84]{ISC}
that does not make the assumption $p\notin\m^2$
(quoted following Proposition~\ref{P.mixed}),
we see that any $A$ having $\lt(\m/\m^2)\leq 1$
satisfies~(\ref{d.Q.A}), with no need for a condition on the
behavior of integer primes $p.$
Also, one can easily extend the
final assertion of Proposition~\ref{P.equichar}
to a commutative ring $S$ which, rather than containing a
field, contains a direct product of fields, or more
generally, a von Neumann regular subring.
Still more generally, using the first statements of both those
propositions, we can extend the second statements
thereof to rings $S$ such
that for every maximal ideal $\m\subseteq S$ and prime $p\in\Z,$
either $p\notin\m^2$ or $p\in\bigcap_n\m^n.$

Let us now turn to rings $S$ and $A$ for which we can show
that~(\ref{d.Q.S}) or~(\ref{d.Q.A}) does {\em not} hold.
The first assertion of the next result generalizes our observations on
the algebra described in~(\ref{d.abxy}) and~(\ref{d.abcd}).
(This will be clearer from the proof than from the statement.)

\begin{proposition}\label{P.I_1,I_2}
Suppose $A$ is a commutative local Artinian ring, with maximal
ideal $\m.$
If $A$ has ideals $I_1$ and $I_2$ with zero intersection,
such that $A/I_1$ and $A/I_2$ have isomorphic submodules
$J_1/I_1\cong J_2/I_2$ satisfying
\begin{equation}\begin{minipage}[c]{35pc}\label{d.lt<}
$\lt(A/I_1)\,+\,\lt(A/I_2)\,-\,\lt(J_1/I_1)\,<\,\lt(A),$
\end{minipage}\end{equation}
\textup{(}equivalently, $\lt(A)<\lt(J_1)+\lt(I_2)),$
then $A$ does not satisfy~\textup{(\ref{d.Q.A}).}

In particular, this is the case if $A$ is any commutative local Artinian
ring satisfying $\m^2=\{0\}$ and $\lt(\m)\geq\nolinebreak 4.$

Hence, no commutative ring $S$ having a maximal ideal
$\m$ with $\lt(\m/\m^2)\geq 4$ satisfies\textup{~(\ref{d.Q.S}).}
\textup{(}In this last statement,
we do not require $\lt(\m/\m^2)$ to be finite.\textup{)}
\end{proposition}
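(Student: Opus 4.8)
The plan is to prove the three assertions in sequence, the first carrying all the weight. For the first, I would build from $I_1,I_2,J_1,J_2$ a faithful $A$-module $M$ with $\lt(M)<\lt(A)$, which by definition violates~(\ref{d.Q.A}). Letting $\phi\colon J_1/I_1\to J_2/I_2$ be the given $A$-module isomorphism, form the module obtained by gluing $A/I_1$ and $A/I_2$ along these submodules,
\[
M \ =\ (A/I_1\oplus A/I_2)/D, \qquad D \ =\ \{(z,\,-\phi(z)) : z\in J_1/I_1\},
\]
a finite-length $A$-module since $A$ is Artinian. Because $z\mapsto(z,-\phi(z))$ identifies $J_1/I_1$ with $D$, we get $\lt(M)=\lt(A/I_1)+\lt(A/I_2)-\lt(J_1/I_1)$, which by hypothesis is $<\lt(A)$; the equivalence of this bound with $\lt(A)<\lt(J_1)+\lt(I_2)$ is just the substitution $\lt(A/I_i)=\lt(A)-\lt(I_i)$, $\lt(J_1/I_1)=\lt(J_1)-\lt(I_1)$. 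For faithfulness I would check that the structure maps $A/I_i\to M$ are injective: an element $(\bar a,0)$ of $A/I_1\oplus A/I_2$ lands in $D$ only if $\bar a\in J_1/I_1$ with $\phi(\bar a)=0$, forcing $\bar a=0$ since $\phi$ is injective, and symmetrically for $A/I_2$. Thus $A/I_i$ sits inside $M$, so $\r{Ann}_A(M)\subseteq\r{Ann}_A(A/I_1)\cap\r{Ann}_A(A/I_2)=I_1\cap I_2=\{0\}$; and $M\neq 0$ since the displayed length is $\geq\lt(A/I_2)>0$ (the hypothesised inequality rules out $I_2=A$). This gives the first assertion.

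For the second assertion, assume $\m^2=\{0\}$ and $n:=\lt(\m)\geq 4$, and set $k=A/\m$; since $\m$ kills $\m$, it is an $n$-dimensional $k$-vector space whose $A$-submodules are exactly its $k$-subspaces, and every subquotient of $\m$ is a $k$-vector space with zero $\m$-action, hence determined up to $A$-isomorphism by its $k$-dimension. Picking a $k$-basis of $\m$, I would take $I_1$ the span of two basis vectors, $I_2$ the span of the remaining $n-2$ (so $I_1\cap I_2=\{0\}$), $J_1$ any $4$-dimensional subspace containing $I_1$ (available since $n\geq 4$), and $J_2=\m$; then $J_1/I_1$ and $J_2/I_2$ are both $2$-dimensional, hence isomorphic, while $\lt(J_1)+\lt(I_2)=4+(n-2)=n+2>n+1=\lt(A)$, so the first assertion applies.

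For the third assertion, recall from the remark following Question~\ref{Q.S} that $S$ satisfies~(\ref{d.Q.S}) if and only if every finite-length homomorphic image of $S$ has the faithful-module property --- which for a \emph{local} such image is precisely~(\ref{d.Q.A}) --- so it suffices to produce one local Artinian homomorphic image of $S$ that fails~(\ref{d.Q.A}). Given a maximal ideal $\m\subseteq S$ with $\dim_k\m/\m^2\geq 4$ (where $k=S/\m$, the dimension possibly infinite), I would choose a $k$-subspace $W\subseteq\m/\m^2$ of codimension $4$ and let $\tilde W\subseteq\m$ be its preimage, so $\m^2\subseteq\tilde W\subseteq\m$. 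Since the $S$-action on $\m/\m^2$ factors through $k$ and $W$ is a $k$-subspace, $\tilde W$ is an $S$-submodule of $\m$, hence an ideal; moreover any maximal ideal of $S$ containing $\tilde W\supseteq\m^2$ must equal $\m$ (a quick computation with $1=a+b$, $a$ in that maximal ideal, $b\in\m$, using $b^2\in\m^2$), so $A:=S/\tilde W$ is local with maximal ideal $\bar\m=\m/\tilde W$, which satisfies $\bar\m^2=\{0\}$ and $\lt(\bar\m)=\dim_k\bar\m=4$; in particular $A$ is Artinian. By the second assertion $A$ fails~(\ref{d.Q.A}), so $S$ fails~(\ref{d.Q.S}); no finiteness of $\dim_k\m/\m^2$ was used.

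The one step I expect to need care is the faithfulness of the glued module $M$ in the first assertion: this is where the hypothesis $I_1\cap I_2=\{0\}$ is actually used, and it depends on the injectivity of the structure maps $A/I_i\to M$, which is exactly the reason for amalgamating $A/I_1$ and $A/I_2$ along a common submodule rather than merely passing to some quotient of their direct sum. The remaining work --- the two length identities, the dimension count for the second assertion, and the ideal-and-localness checks for the third --- is routine.
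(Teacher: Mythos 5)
Your proposal is correct and follows essentially the same route as the paper: glue $A/I_1$ and $A/I_2$ along the isomorphic submodules to get a faithful module of the stated length, then for the square-zero case choose suitable subspaces of $\m$ as the $I$'s and $J$'s (the paper takes $I_1,I_2$ of equal dimension $e\ge 2$ with zero intersection and $J_1=J_2=\m$, a cosmetically different but equivalent choice to yours), and finally pass from $S$ to the finite-length local quotient $S/\tilde W$. Your write-up just makes explicit a few routine verifications (injectivity of the structure maps $A/I_i\to M$, localness of $S/\tilde W$) that the paper leaves implicit.
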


\begin{proof}
In the situation of the first paragraph, let $M$ be the
module obtained from $A/I_1\oplus A/I_2$ by identifying
the isomorphic submodules
$J_1/I_1\subseteq A/I_1$ and $J_2/I_2\subseteq A/I_2.$
Each of $A/I_1$ and $A/I_2$ still
embeds in $M,$ so since the annihilators
$I_1$ and $I_2$ of these modules
have zero intersection, $M$ is a faithful $\!A\!$-module.
Now $\lt(M)$ is given by the left-hand side of~(\ref{d.lt<}),
hence that inequality shows the failure of~(\ref{d.Q.A}).
The parenthetical statement of equivalence on the
line after~(\ref{d.lt<}) is seen by expanding
the expressions of the form ``$\lt(P/Q)$'' in~(\ref{d.lt<})
as $\lt(P)-\lt(Q),$ and simplifying.

(In the example described in~(\ref{d.abxy}) and~(\ref{d.abcd}),
we can take $I_1=\r{Ann}_A(x) = (c,\,d),$
$I_2=\r{Ann}_A(y) = (a^m,\,a^{m-1}b,\dots,b^m),$
$J_1=\{f\in A\mid fx\in Acy+Ady\}=I_1+(a^m,b^m),$ and
$J_2=\{f\in A\mid fy\in Acy+Ady\}=I_2+(c,d).)$

To get the assertion of the second paragraph, let $\lt(\m)=d,$
so that $\m$ can be regarded as a $\!d\!$-dimensional
vector space over $A/\m.$
Let $I_1$ and $I_2$ be any subspaces of $\m$
of equal dimension $e\geq 2,$ and having
zero intersection (these exist because
$d\geq 4),$ and let $J_1=J_2=\m.$
By comparison of dimensions, $J_1/I_1\cong J_2/I_2$
as $\!A/\m\!$-modules, and hence as $\!A\!$-modules.
Now $\lt(J_1)+\lt(I_2) =d+e>d+1=\lt(A),$ giving the inequality
noted parenthetically as equivalent to~(\ref{d.lt<}).

For $S$ and $\m$ as in the final statement, let $A_0$
be the local ring $S/\m^2,$ with square-zero
maximal ideal $\m_0=\m/\m^2.$
Since $A_0$ need not have finite length, let us divide
out by an $\!A_0/\m_0\!$-subspace of $\m_0$
whose codimension is finite but $\geq 4.$
The result is a homomorphic image $A$ of $S$ which has
finite length and, by the second assertion
of the lemma, fails to satisfy~(\ref{d.Q.A}).
Hence $S$ fails to satisfy~(\ref{d.Q.S}).
\end{proof}

If it should turn out that~(\ref{d.Q.A}) holds for
every $A$ with $\lt(\m/\m^2)\leq 3,$ we would have a complete answer
to Question~\ref{Q.S}; for comparing that fact
with Proposition~\ref{P.I_1,I_2},
we could conclude that the rings $S$ satisfying~(\ref{d.Q.S})
are precisely those for which all maximal ideals $\m$
satisfy $\lt(\m/\m^2)\leq\nolinebreak 3.$

From the proof of Proposition~\ref{P.I_1,I_2},
we can see that the existence of ideals satisfying~(\ref{d.lt<})
is necessary and sufficient for
the existence of a {\em $\!2\!$-generator} $\!A\!$-module $M$
witnessing the failure of~(\ref{d.Q.A}).
For higher numbers of generators, it seems hard to formulate
similar necessary and sufficient conditions;
though one can give {\em sufficient} conditions,
corresponding to necessary and sufficient conditions
for the existence of such modules with particular sorts
of structures (e.g., sums of three cyclic submodules, each pair
of which is glued together along a pair of isomorphic submodules),
and these might be useful in looking for examples.

We have seen that the algebras described in \S\ref{S.4-gen} are cases
of Proposition~\ref{P.I_1,I_2}.
For a further example, suppose we adjoin to $\Z$
the $\!7\!$-th root of a prime $p,$ and then pass to the subring
\begin{equation}\begin{minipage}[c]{35pc}\label{d.4567}
$S\ =\ \Z\,[\,p^{4/7},\,p^{5/7},\,p^{6/7}].$
\end{minipage}\end{equation}
This has a maximal ideal $\m$ generated
by $\{p^{4/7},\,p^{5/7},\,p^{6/7},\,p\},$
and these generators are linearly independent modulo $\m^2,$
so by Proposition~\ref{P.I_1,I_2}, $S$ does not satisfy~(\ref{d.Q.S}).

The next result will give us a further class of rings $A$
that {\em do} satisfy~(\ref{d.Q.A}).
However, this class is not closed under
homomorphic images, and can fail to satisfy~(\ref{d.Q.S}).
Thus, though the result will add to what we know regarding
Question~\ref{Q.A}, it says little about
Question~\ref{Q.S}, which inspired that question.

We recall that a commutative local Artinian
ring $A$ is said to be {\em Frobenius} if it is cocyclic as an
$\!A\!$-module, i.e., if its socle has length~$1.$
(For an Artinian but not-necessarily-commutative, not-necessarily-local
ring, the Frobenius condition is the statement
that the socle is isomorphic as right and as left module to $A/J(A)$
\cite[Theorem~16.14(4)]{TYL}.)

\begin{lemma}\label{L.cocyc}
Every Frobenius commutative local Artinian ring $A$
satisfies~\textup{(\ref{d.Q.A})}.
\end{lemma}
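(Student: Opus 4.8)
The statement to prove is that every Frobenius commutative local Artinian ring $A$ satisfies~(\ref{d.Q.A}), i.e., every faithful $\!A\!$-module $M$ has $\lt_A(M)\geq\lt_A(A)$. The plan is to exploit the defining property of Frobenius rings: the socle of $A$ has length~$1$, equivalently (by Matlis duality for the Artinian local ring $A$) $A$ is its own injective hull as an $\!A\!$-module, so $A$ is the injective hull of its residue field $k=A/\m$. The key point I would use is that over a commutative Artinian local Frobenius ring $A$, the functor $N\mapsto N^{\vee}=\r{Hom}_A(N,A)$ is an exact, length-preserving duality on finite-length modules, with $A^{\vee}\cong A$.

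First I would reduce to showing: if $M$ is faithful, then $A$ embeds in $M$. Indeed, an embedding $A\hookrightarrow M$ immediately gives $\lt_A(M)\geq\lt_A(A)$. Second, I would translate faithfulness into a statement about the dual: $M$ is faithful over $A$ means $\r{Ann}_A(M)=0$, and I claim this is equivalent to saying the natural map $A\to M^{\vee\vee}$ — or rather, more usefully, that there is a surjection $M^{\vee}\twoheadrightarrow A$. The cleanest route: faithfulness of $M$ says that the evaluation pairing $M\times M^{\vee}\to A$ has the property that no nonzero element of $A$ kills all of $M$; using that $A$ is cocyclic with $1\!$-dimensional socle, one shows the image of $M^{\vee}$ under... hmm, let me instead argue via the dual module directly. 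Take any $m\in M$ generating a submodule on which $A$ acts faithfully — but faithfulness is a property of $M$, not of cyclic submodules, so that needs care. The honest approach is: since $M$ is faithful, the map $A\to \prod_{m\in M} Am$ is injective, so $A$ embeds into a finite direct sum $\bigoplus_{i} Am_i \cong \bigoplus_i A/\r{Ann}(m_i)$ of cyclic modules (finitely many suffice by Artinian-ness). Now I would use that $A$, being Frobenius, is \emph{injective} as an $\!A\!$-module: dualizing the inclusion $A\hookrightarrow \bigoplus_i A/\r{Ann}(m_i)$ gives a surjection $\bigoplus_i (A/\r{Ann}(m_i))^{\vee}\twoheadrightarrow A^{\vee}\cong A$. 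But $(A/\r{Ann}(m_i))^{\vee}$ is isomorphic to the ideal $\r{Ann}_A(\r{Ann}(m_i))\subseteq A$, hence to $Am_i$ as an $\!A\!$-module (Matlis duality identifies $(A/I)^{\vee}$ with the annihilator of $I$, and for cyclic faithful-on-themselves... ), so we get a surjection $\bigoplus_i Am_i \twoheadrightarrow A$. Composing with the inclusion $\bigoplus_i Am_i\hookrightarrow M$... no, that goes the wrong way.

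So I would instead dualize \emph{once more}: from the surjection $\bigoplus_i Am_i \twoheadrightarrow A$ we get, on applying $(-)^{\vee}$ and using exactness plus $A^{\vee}\cong A$, an embedding $A\hookrightarrow \bigoplus_i (Am_i)^{\vee}$. That still isn't $M$. The correct single clean statement, which I would isolate as the crux, is: \emph{for a Frobenius Artinian local ring $A$, a finite-length $\!A\!$-module $M$ is faithful if and only if $A$ embeds in $M$.} To prove the nontrivial direction, dualize: $M$ faithful $\iff$ $M^{\vee}$ is a module with $A$ as a homomorphic image (this is the genuine content — $\r{Ann}(M)=\r{Ann}(M^{\vee})$ and $M^{\vee}$ faithful with $M^{\vee}$ finitely generated forces a surjection $M^{\vee}\twoheadrightarrow A/\r{Ann}(M^\vee)=A$, because a faithful finitely generated module over $A$ must surject onto $A$... which is again exactly what we are trying to prove).

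\textbf{The main obstacle}, as the circularity above shows, is getting a \emph{first} foothold: showing that some faithful finitely generated module over a Frobenius Artinian local ring surjects onto (equivalently, for $A$ self-injective, contains) a copy of $A$. The right tool is Matlis/Morita duality for Artinian local rings combined with the self-injectivity $E(k)\cong A$ characterizing the Frobenius condition. Concretely: let $E=E(k)$ be the injective hull of the residue field; over any Artinian local commutative ring, $\lt(N^{\vee})=\lt(N)$ where $N^{\vee}=\r{Hom}_A(N,E)$, and $(-)^{\vee}$ is an exact contravariant self-equivalence with $N^{\vee\vee}\cong N$ and $A^{\vee}\cong E$, $E^{\vee}\cong A$. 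The Frobenius hypothesis is precisely $E\cong A$. Now if $M$ is faithful, $\r{Ann}(M^\vee)=\r{Ann}(M)=0$, so $M^\vee$ is a faithful finitely generated module. I would then want a surjection $M^\vee \to A$; for this, pick generators $n_1,\dots,n_r$ of $M^\vee$ and note $\bigcap_j \r{Ann}(n_j)=\r{Ann}(M^\vee)=0$; since $A$ is cocyclic (socle length~$1$), an intersection of nonzero ideals is nonzero, so some $\r{Ann}(n_j)=0$, i.e. some cyclic submodule $An_j\cong A$ sits inside $M^\vee$ — \emph{that} is where the Frobenius hypothesis does real work, and it is not circular because it is a statement about $M^\vee$, a submodule-of-something argument, not about $M$. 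Dualizing the inclusion $A\cong An_j\hookrightarrow M^\vee$ and using exactness of $(-)^\vee$ with $A^\vee\cong E\cong A$ yields a surjection $M^{\vee\vee}\cong M\twoheadrightarrow A$, whence $\lt(M)\geq\lt(A)$, which is even slightly stronger than an embedding and certainly gives~(\ref{d.Q.A}). I would present exactly this line: set up Matlis duality, record $\lt(N^\vee)=\lt(N)$ and $A^\vee\cong A$ (the Frobenius condition), observe that cocyclicity of $A$ forces a copy of $A$ inside the faithful module $M^\vee$, and dualize. The one genuinely delicate point to get right is the claim that over a cocyclic Artinian local ring a faithful finitely generated module contains a free-of-rank-one submodule — i.e. that $\bigcap$ of nonzero annihilator ideals is nonzero — which follows since every nonzero ideal of $A$ contains the unique minimal ideal (the socle), so their intersection does too, contradicting faithfulness unless one annihilator already vanishes.
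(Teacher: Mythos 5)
Your final argument is correct, and its decisive step is mathematically identical to the paper's one-line proof; you have simply wrapped it in a layer of Matlis duality that does no work. The paper argues directly on $M$: since $M$ is faithful, $\bigcap_{x\in M}\r{Ann}_A(x)=0$; since $A$ is Artinian local with simple socle, every nonzero ideal contains that socle, so if every $\r{Ann}_A(x)$ were nonzero their intersection would contain the socle; hence some $x$ has $\r{Ann}_A(x)=0$, $A\,x\cong A$ embeds in $M$, and $\lt M\geq\lt A.$ This is exactly your ``genuinely delicate point'' at the end --- but you apply it to $M^{\vee}$ rather than to $M,$ apparently because you convinced yourself mid-proof that using it on $M$ would be circular (``faithfulness is a property of $M,$ not of cyclic submodules, so that needs care''). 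It is not circular: the statement ``a faithful module over a cocyclic Artinian local ring contains an element with zero annihilator'' is proved outright from essentiality of the socle, whether the module is $M$ or $M^{\vee},$ and it does not even need finite generation. Applying it to $M^{\vee}$ and then dualizing back to get a surjection $M\twoheadrightarrow A$ costs you the whole apparatus of $E(k)\cong A,$ exactness and length-preservation of $(-)^{\vee},$ and $M^{\vee\vee}\cong M$ (plus a silent reduction to $\lt M<\infty$ so that these dualities apply), all to recover an inequality you could have read off from the embedding $A\hookrightarrow M$ one dualization earlier. So: no gap in the end, but trim the first two-thirds of the writeup, which consists of abandoned and partly circular attempts, and state the direct argument.
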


\begin{proof}
If $M$ is a faithful $\!A\!$-module, then $M$ has an
element $x$ not annihilated by $\r{socle}(A).$
Since $\r{socle}(A)$ is simple, the annihilator of $x$
has trivial intersection with that socle, hence is zero.
So $A\,x$ is a faithful cyclic $\!A\!$-module, hence
has length equal to the length of $A,$ so $\lt M\geq\lt A.$
\end{proof}

For an example of
a ring as in the above lemma which has $\lt(\m/\m^2)\geq 4,$
and therefore, though we have just seen
that it satisfies~(\ref{d.Q.A}), will not
satisfy~(\ref{d.Q.S}), let $k$ be a field, take any $n_1,\dots,n_4>0,$
and let $A=k[t_1,\,t_2,\,t_3,\,t_4]/
(t_1^{n_1+1},\,t_2^{n_2+1},\,t_3^{n_3+1},\,t_4^{n_4+1}).$
Then the socle of $A$ is the
$\!1\!$-dimensional space spanned by the element
$t_1^{n_1}\,t_2^{n_2}\,t_3^{n_3}\,t_4^{n_4},$ so $A$ is Frobenius,
but $\m/\m^2$ is $\!4\!$-dimensional, with basis
$t_1,\,t_2,\,t_3,\,t_4.$

Let us also note, in contrast with the above lemma,
that a {\em large} socle does not {\em prevent}
a ring from satisfying~(\ref{d.Q.A}).
For instance, for $k$ a field and $n$ any positive integer, the algebra
$A=k[s,\,t]/(s^n,\,s^{n-1}t,\dots,\linebreak[1]\,s\,t^{n-1},\,t^n)$
has socle of length $n,$ with basis
$\{s^{n-1},\,s^{n-2}\,t,\dots,\,t^{n-1}\},$ but
by Proposition~\ref{P.equichar}, $A$ satisfies~(\ref{d.Q.A}).

However, Lemma~\ref{L.cocyc}, together with
the idea of Propositions~\ref{P.equichar} and~\ref{P.mixed}, suggests

\begin{question}\label{Q.co2gen}
Does every commutative local Artinian ring $A$ whose socle has
length $\leq 2$ \textup{(}or even $\leq 3)$
satisfy~\textup{(\ref{d.Q.A})}?
\end{question}

In an appendix,~\S\ref{S.gens.socs}, we shall obtain
some results on modules over a not necessarily commutative ring $A,$
which, for $A$ commutative Artinian, generalize
Lemma~\ref{L.cocyc} to show that if $A$
has socle of length $n$ and does not satisfy\textup{~(\ref{d.Q.A})},
then any minimal-length $\!A\!$-module $M$ witnessing this
failure must be generated by $\leq n$ elements, and, dually,
must have socle of length $\leq n.$

We know from the module diagrammed in~(\ref{d.4-gen}) that
an $M$ generated by $2$ elements and also having $\r{socle}(M)$
of length $2$ can witness the failure of\textup{~(\ref{d.Q.A})}.
But note that in that example, $\r{socle}(A)$ has length~$4,$ and
the construction ``economizes'', using few vertices at the top
and bottom to host a large number of edges representing
elements of $\r{socle}(A)$ in between.
It seems likely that this is an instance of some general properties of
modules witnessing the failure of\textup{~(\ref{d.Q.A})}.
If so, it may
be possible to strengthen, for such modules, the bounds just
mentioned, as suggested in

\begin{question}\label{Q.better}
Let $A$ be a commutative local Artinian ring, with socle of length $n,$
and maximal ideal $\m.$

If $A$ does not satisfy\textup{~(\ref{d.Q.A})},
and $M$ is an $\!A\!$-module witnessing this fact, must
$\lt(M/\m M)$ and/or $\lt(\r{socle}(M))$ be $\leq n-1$?

In the above situation, and perhaps, more generally, if
$M$ is a faithful $\!A\!$-module satisfying $\lt(M)\leq\lt(A),$
must $\lt(M/\m M)+\lt(\r{socle}(M))\leq n+1$?
\end{question}

The final part of the above question is suggested by the observation
that faithful cyclic modules, faithful cocyclic modules, and
all the modules described in \S\ref{S.4-gen} and
\S\ref{S.3-gen} satisfy the stated inequality.
A positive answer to either part of the question would
immediately give a positive
answer to the ``length~$\leq 2$'' case of Question~\ref{Q.co2gen}.

The final result of this section concerns rings $A$ that are very small.
We first note

\begin{lemma}\label{L.lt3}
Any module $M$ of length $\leq 3$ over a
\textup{(}not necessarily commutative or Artinian\textup{)} ring $A$
is either a direct
sum of cyclic modules, or a direct sum of cocyclic modules.
\end{lemma}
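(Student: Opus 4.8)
The plan is to analyze $M$ through its radical $\r{rad}(M)=\r{rad}(A)\,M$ and its socle $\r{socle}(M)$, using two standard facts valid for \emph{any} module of finite length over \emph{any} ring: $\r{rad}(M)$ is superfluous in $M$, so that $M$ is cyclic if and only if $M/\r{rad}(M)$ is cyclic; and $\r{socle}(M)$ is essential in $M$, so that $M$ is cocyclic if and only if $\r{socle}(M)$ is simple. I will also use the standard fact that a finite-length module with zero radical is semisimple, hence a direct sum of (cyclic) simple modules; in particular $M/\r{rad}(M)$ is always semisimple.

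First I would dispose of the case where $M$ is cyclic, for then $M$ is a one-term direct sum of cyclic modules. So assume $M$ is not cyclic. Then $\overline{M}:=M/\r{rad}(M)$ is semisimple and noncyclic, hence of length $2$ or $3$ (length $\leq 1$ would make $\overline M$, and hence $M$, cyclic). If $\r{rad}(M)=0$ then $M=\overline{M}$ is semisimple, hence a direct sum of simple modules, and we are done. Otherwise, since $\lt(M)\leq 3$ and $\lt(\overline M)\geq 2$, we are forced to have $\lt(M)=3$, $\r{rad}(M)$ simple, and $\overline M=S_1\oplus S_2$ with $S_1,S_2$ simple. Since the simple submodule $\r{rad}(M)$ lies in $\r{socle}(M)$, and since $\r{socle}(M)$ cannot have length $3$ (that would make $M$ semisimple, contradicting $\r{rad}(M)\neq 0$), there are only two possibilities: either $\r{socle}(M)$ is simple, in which case $M$ is cocyclic and we are done; or $\lt(\r{socle}(M))=2$, which is the one remaining configuration to handle.

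In that configuration I would exhibit a decomposition of $M$ into two cyclic modules. Write $\r{socle}(M)=\r{rad}(M)\oplus T$ with $T$ simple, which is possible as $\r{socle}(M)$ is semisimple containing the length-one submodule $\r{rad}(M)$. Since $T\cap\r{rad}(M)=0$, the quotient map $\pi\colon M\to\overline M$ embeds $T$ into the semisimple module $\overline M$; choose a simple submodule $\overline{T}'$ of $\overline M$ complementary to $\pi(T)$, and set $N=\pi^{-1}(\overline{T}')$, a submodule of length $2$ containing $\r{rad}(M)=\ker\pi$. Comparing images under $\pi$ gives $\pi(T+N)=\pi(T)\oplus\overline{T}'=\overline M$, whence $T+N=M$ since $\ker\pi\subseteq N$; and $T\cap N$, being a submodule of the simple module $T$, cannot equal $T$ (that would force $\pi(T)\subseteq\overline{T}'$, contradicting $\pi(T)\cap\overline{T}'=0$ and $\pi(T)\neq 0$), so $T\cap N=0$. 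Thus $M=T\oplus N$. Finally $N$ cannot be semisimple (otherwise $M$ would be), so $N$ has length $2$ with nonzero radical, hence $N/\r{rad}(N)$ is simple and $N$ is cyclic; therefore $M=T\oplus N$ is a direct sum of two cyclic modules.

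The genuinely nondegenerate step, and the one I expect to be the main obstacle, is precisely this last case: the earlier branches only require bookkeeping with the lengths of radicals and socles, whereas here one must actually produce a splitting, the key device being to use the semisimplicity of $\overline M=M/\r{rad}(M)$ to locate a simple submodule $T$ of $M$ that splits off. I would also remark that the same argument covers lengths $\leq 2$ with nothing extra to do: for noncyclic $M$ with $\lt(M)\leq 2$ the branch with $\r{rad}(M)\neq 0$ cannot occur (it would force $\lt(M)=3$), so such $M$ is semisimple of length $2$ and hence a direct sum of two simple modules.
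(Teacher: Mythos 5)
Your proof is correct, and its decisive step is the same as the paper's: locate a simple submodule of $\r{socle}(M)$ not contained in the radical (you get it as a complement $T$ of $\r{rad}(M)$ inside a length-$2$ socle; the paper gets it from the inequality $\lt(\r{rad}(M))<\lt(\r{socle}(M))$), and split it off by pulling back a complement of its image in the semisimple top $M/\r{rad}(M)$. The remaining differences are only in bookkeeping — you enumerate the length configurations explicitly where the paper disposes of the length-$\leq 2$ complement in one stroke — so this is essentially the paper's argument.
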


\begin{proof}
If $M$ is not itself cocyclic, this means $\lt(\r{socle}(M)) \geq 2,$
and dually, if $M$ is not cyclic, then $\lt(M/\m M)\geq 2,$ which,
subtracting from $3,$ gives $\lt(\m M)\leq 1<\lt(\r{socle}(M)).$
Hence $\r{socle}(M)$ must have a simple submodule $N$
not contained in $\m M.$
If we take the inverse image $L\subseteq M$ of a complement
of the image of $N$ in the semisimple module $M/\m M,$
this will still not contain $N,$ and since $N$ has
length $1,$ we see that $M=N\oplus L.$
Now $\lt(L)\leq 2,$ and it is easy to see that a module
of that length is either simultaneously cyclic and
cocyclic, or a direct sum of two simple submodules, in either
case giving the desired decomposition of~$M.$
\end{proof}

\begin{corollary}\label{C.ltA_<3}
Suppose $A$ is a commutative local Artinian ring
which has length $\leq 4$ \textup{(}equivalently, whose maximal ideal
$\m$ has length $\leq 3).$
Then $A$ satisfies~\textup{(\ref{d.Q.A}).}
\end{corollary}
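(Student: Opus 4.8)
The plan is to reduce to the case of a faithful $\!A\!$-module $M$ of minimal length witnessing a hypothetical failure of~(\ref{d.Q.A}), and then use Lemma~\ref{L.lt3} to pin down the structure of such an $M$ so tightly that a contradiction emerges from the small size of $A$. Since $\lt(A)\leq 4$, any faithful $M$ with $\lt(M)<\lt(A)$ has $\lt(M)\leq 3$, so Lemma~\ref{L.lt3} applies to $M$: it is either a direct sum of cyclic modules or a direct sum of cocyclic modules.

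First I would dispose of the cyclic case. If $M$ is cyclic and faithful, then by the observation made in \S\ref{S.3-gen} (commutativity plus faithfulness), $M\cong A$ as an $\!A\!$-module, so $\lt(M)=\lt(A)$, contradicting $\lt(M)<\lt(A)$. More generally, if $M=M_1\oplus\dots\oplus M_r$ is a direct sum of cyclic modules $M_i\cong A/I_i$, faithfulness means $\bigcap I_i=\{0\}$; if $r=1$ we are back to the previous case, and if $r\geq 2$ the argument given in \S\ref{S.3-gen} for direct sums of two cyclics (via the exact sequence $0\to A/(I_1\cap I_2)\to A/I_1\oplus A/I_2\to A/(I_1+I_2)\to 0$, iterated, with all $I_i\subseteq\m$) shows $\lt(M)\geq\lt(A)$, in fact strictly exceeds it unless some $I_i=\{0\}$. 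Either way~(\ref{d.Q.A}) holds for such $M$.

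The cocyclic case is dual, and this is where I would work a little harder. If $M$ is a direct sum of cocyclic modules, I would pass to the dual module $M^*=\r{Hom}_{A/\m^N}(M,A/\m^N)$—or, cleanly, note that since $A$ is Artinian and self-injective need not hold, one instead argues as follows: a direct sum of cocyclic modules is the dual (via Matlis duality over the Artinian local ring $A$, or elementarily since everything has finite length) of a direct sum of cyclic modules, and Matlis duality preserves length and preserves faithfulness (the annihilator of $M^*$ equals the annihilator of $M$). So $M$ is faithful and a direct sum of cocyclic modules if and only if $M^*$ is faithful and a direct sum of cyclic modules, and we have just shown $\lt(M^*)\geq\lt(A)$; since $\lt(M)=\lt(M^*)$, we get $\lt(M)\geq\lt(A)$ again. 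Alternatively, to avoid invoking duality machinery, one can run the dual of the \S\ref{S.3-gen} computation directly on socles, using that for cocyclic $M_i$ the relevant invariant is the socle, and an analogous exact sequence; but the duality phrasing is shorter.

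The main obstacle I anticipate is making the cocyclic case airtight without hand-waving the duality: one must confirm that over a commutative Artinian local ring $A$, the functor $\r{Hom}_A(-,E)$ with $E$ the injective hull of $A/\m$ sends faithful finite-length modules to faithful finite-length modules of the same length, and sends cocyclic modules to cyclic ones and direct sums to direct sums. All of this is standard Matlis duality for Artinian local rings, but since the paper has been elementary so far, I would likely include a one-line justification or, better, sidestep it by redoing the \S\ref{S.3-gen} length count dually in terms of socles — the arithmetic is the mirror image and needs no new idea. Once both cases of Lemma~\ref{L.lt3} give $\lt(M)\geq\lt(A)$, no minimal counterexample can exist, so~(\ref{d.Q.A}) holds for every such $A$; the equivalence "$\lt(A)\leq 4 \iff \lt(\m)\leq 3$" is immediate since $\lt(A)=\lt(\m)+1$ ($A/\m$ being simple).
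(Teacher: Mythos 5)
Your proof is correct and follows essentially the same route as the paper's: reduce a putative counterexample to length $\leq 3,$ invoke Lemma~\ref{L.lt3}, dispose of faithful direct sums of cyclic modules by iterating the inequality $\lt(A/I)+\lt(A/J)>\lt(A/(I\cap J)),$ and handle the cocyclic case by duality. If anything, your explicit appeal to Matlis duality over the Artinian local ring $A$ makes the cocyclic case more airtight than the paper's terse reference back to \S\ref{S.3-gen}, whose $\!k\!$-vector-space duality $\r{Hom}_k(-,k)$ literally applies only when $A$ is an algebra over a field.
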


\begin{proof}
Any $M$ witnessing the failure
of~(\ref{d.Q.A}) would have length $<\lt(A),$ so by the above lemma,
it would be a direct sum of cyclic or of cocyclic modules.
We saw in \S\ref{S.3-gen} that a direct sum of {\em two} cyclic
or cocyclic modules cannot give a counterexample to~(\ref{d.Q.A}).
One can generalize this result to any number of cyclic or cocyclic
modules (using repeatedly the observation
$\lt(A/I)+\lt(A/J)>\lt(A/I\cap J)),$ giving the desired result.
\end{proof}

So, for instance, this result applies to
the ring of~(\ref{d.3-gen1}) (essentially, the ring
generated by matrix units $e_{13},$ $e_{14},$ $e_{24}).$
Another example can be gotten by taking the ring
$\Z\,[\,p^{2/5},\,p^{3/5}]/(p^2),$ for which I noted above that
I have not been able to prove~(\ref{d.Q.A}), and truncating it
a bit further, to $\Z\,[\,p^{2/5},\,p^{3/5}]/(p^{4/5},\,p^{6/5}).$
This has maximal ideal of length~$3,$ spanned modulo
its square by $\{\,p^{2/5},\,p^{3/5}\},$
and with square spanned by $p=p^{2/5}\,p^{3/5}.$
So this truncation does satisfy~(\ref{d.Q.A}).
(We remark that, the explicit rational powers of $p$ in our
description of this ring are illusory;
it could equally well be written $\Z/(p^2)[s,\,t]/(s^2,\,t^2,\,st-p).)$

\section{Other sorts of questions}\label{S.other_Qs}
Though the focus of this note has been the
condition $\lt A\leq \lt M,$ one can ask, more generally, how
big $\lt A/\lt M$ can become in cases where it may exceed~$1.$
To maximize the hope of positive
results, I will pose the question here for algebras of endomorphisms
of vector spaces.

\begin{question}\label{Q.r_d}
For each positive integer $d,$ let $r_d$ be the supremum
of the ratio $\r{dim}_k\,A/\r{dim}_k V,$ over all
commutative {\em $\!d\!$-generator} algebras $A$ of endomorphisms of
nonzero finite-dimensional vector-spaces $V$ over arbitrary fields $k.$

\textup{(}Thus, the $r_d$ form a nondecreasing sequence,
whose terms are real numbers or $+\infty.$
We know that $r_1=r_2=1,$ and the examples of~{\rm \S\ref{S.3-gen}}
and~{\rm \S\ref{S.4-gen}} respectively suggest
that $r_3$ may be $1,$ and $r_4$ may be $5/4.)$

Determine as much as possible about this sequence.
In particular, are all its terms finite?
\end{question}

I don't even see how to prove $r_3$ finite!
(If we did not restrict ourselves to {\em commuting} endomorphisms,
these suprema would become infinite for $d=2,$
since the full $n\times n$
matrix algebra can be generated by two matrices, and the ratio of its
dimension to that of the space on which it acts, $n^2/n=n,$
is unbounded.)

Something we {\em can} say is that as a function of $d,$
the $r_d$ increase without bound:

\begin{lemma}\label{L.r_d}
If $d,\,e_0,\,e_1$ are positive integers such that $d\geq e_0\,e_1,$
then $r_d\geq (e_0\,e_1+1)/(e_0+e_1).$

Hence, taking $e_0=2m-1,$ $e_1=2m+1,$ we see that $r_{4m^2-1}\geq m.$
\end{lemma}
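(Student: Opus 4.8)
The plan is to exhibit, over an arbitrary field $k$, a single explicit commutative algebra attaining the ratio $(e_0e_1+1)/(e_0+e_1)$ and generated by exactly $e_0e_1$ elements, and then to invoke the monotonicity of the sequence $(r_d)$ together with the hypothesis $d\ge e_0e_1$. The construction generalizes the standard $4\times 4$ example of \S\ref{S.4-gen} (which is the case $e_0=e_1=2$): I would let $V$ have a basis split into two blocks $s_1,\dots,s_{e_0}$ and $t_1,\dots,t_{e_1}$, so $\dim_k V=e_0+e_1$, and for each pair $(i,j)$ with $1\le i\le e_0$, $1\le j\le e_1$ define $n_{ij}\in\mathrm{End}_k(V)$ to be the rank-one map sending $t_j$ to $s_i$ and killing every other basis vector (so $n_{ij}$ is the analogue of the matrix unit $e_{i,\,e_0+j}$).

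The three things to check are all routine. First, since the image of every $n_{ij}$ lies in $\mathrm{span}(s_1,\dots,s_{e_0})$, which every $n_{i'j'}$ annihilates, all products $n_{ij}n_{i'j'}$ vanish; hence the $n_{ij}$ commute, the unital $k$-subalgebra $A\subseteq\mathrm{End}_k(V)$ they generate is spanned by $1$ together with the (visibly linearly independent) $e_0e_1$ maps $n_{ij}$, and $\dim_k A=e_0e_1+1$. In particular $A$ is a commutative $e_0e_1$-generator algebra. Second, $A$ acts faithfully on $V$: if $c_0\cdot 1+\sum_{i,j}c_{ij}n_{ij}$ kills every $t_j$, then $c_0t_j+\sum_i c_{ij}s_i=0$ for each $j$, forcing $c_0=0$ and all $c_{ij}=0$. (This also identifies $A$ as an instance of Proposition~\ref{P.I_1,I_2}, with a square-zero maximal ideal of length $e_0e_1$, but the direct verification is shorter.) Hence $\dim_k A/\dim_k V=(e_0e_1+1)/(e_0+e_1)$, so $r_{e_0e_1}\ge(e_0e_1+1)/(e_0+e_1)$; since $(r_d)$ is nondecreasing and $d\ge e_0e_1$, the asserted inequality $r_d\ge(e_0e_1+1)/(e_0+e_1)$ follows. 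The last assertion is the substitution $e_0=2m-1$, $e_1=2m+1$, giving $e_0e_1=4m^2-1$, $e_0+e_1=4m$, and $(e_0e_1+1)/(e_0+e_1)=4m^2/(4m)=m$.

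There is no real obstacle here beyond hitting on the right square-zero construction; the one point that needs a moment's care is to keep straight that $r_d$ counts generators \emph{of the algebra} (which is why the $e_0e_1$ maps $n_{ij}$ are what matters), not generators of $V$ as an $A$-module --- indeed $V$ requires $\max(e_0,e_1)$ module generators, which plays no role. One might also wonder whether a cleverer gluing, in the spirit of Proposition~\ref{P.I_1,I_2} but using proper submodules $J_1,J_2$ rather than all of $\m$, could beat this ratio for a fixed $d$, but for the stated bound the square-zero example already suffices.
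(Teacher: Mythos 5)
Your proposal is correct and is essentially the paper's own proof: the paper likewise takes $V=V_0\oplus V_1$ with $\dim V_0=e_0,$ $\dim V_1=e_1,$ and lets $A$ be spanned by the identity together with all endomorphisms carrying one block into the other and annihilating the latter (your $n_{ij}$ are exactly a matrix-unit basis of that $e_0e_1$-dimensional square-zero space). The verifications of commutativity, dimension count, and the final substitution match the paper's argument.
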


\begin{proof}
Let $V$ be the direct sum of an $\!e_0\!$-dimensional vector space $V_0$
and an $\!e_1\!$-dimensional space $V_1,$
and $A$ be the algebra of endomorphisms of $V$ spanned by
the identity, and all endomorphisms that carry $V_0$ into $V_1$
and annihilate $V_1.$
Since any two endomorphisms of the latter sort have product $0,$
$A$ is commutative.
It is generated as an algebra by any basis of the
$\!e_0 e_1\!$-dimensional space of
such endomorphisms, hence
{\em a fortiori} it can be generated by $d\geq e_0\,e_1$ elements.
Since $A$ has dimension $e_0\,e_1+1$ and $V$ has dimension
$e_0+e_1,$ we get $r_d\geq (e_0\,e_1+1)/(e_0+e_1),$ as claimed.

The final sentence clearly follows.
\end{proof}

In the spirit of \cite[\S3]{E}, we might expect that the
inequalities we have obtained for algebras of endomorphisms
of vector spaces would entail analogous inequalities for
monoids of endomaps of sets, with cardinalities replacing dimensions.
But this is not the case.
For instance, a $\!1\!$-generator group of permutations of an
$\!n\!$-element set can have order much larger than $n,$ if the
generating permutation has many cycles of relatively prime lengths.
(The reason why results on algebras don't imply the
corresponding results for monoids is that the matrices corresponding
to a family of distinct
endomaps of a finite set need not be linearly independent.)

I will end by repeating, in slightly generalized form, a
question I asked in~\cite{E}, which resembles the subject considered
{\em here} (and differs from the subject considered {\em there})
in that it asks whether the size of a certain
family of actions is bounded by the size of the object it acts on; but
which is otherwise only loosely related to the topic of either paper.

\begin{question}[after {\cite[Question 23]{E}}]\label{Q.E}
Let $R$ be a commutative algebra over a commutative ring $k,$
let $V$ be a $\!k\!$-submodule of $R,$ and let $n$ be a
positive integer such that the $\!k\!$-submodule $V^n\subseteq R$
of all sums of $\!n\!$-fold products of
elements of $V$ has finite length as a $\!k\!$-module.
Then must
\begin{equation}\begin{minipage}[c]{35pc}\label{d.E}
$\lt_k\,(V/\r{Ann}_V(V^n))~\leq~\lt_k(V^n),$
\end{minipage}\end{equation}
where $\r{Ann}_V(V^n)$ denotes $\{x\in V~|~x\,V^n=\{0\}\,\}\subseteq V$?
\end{question}

\section{Acknowledgements}\label{S.ackn}
I am indebted to Arthur Ogus for asking whether two commuting
$n\times n$ matrices generate an algebra of dimension $\leq n$
(when neither of us was aware that this was a known result), and for
a suggestion he made in the ensuing discussion, which turned into the
proof of Corollary~\ref{C.CH}; and to Kevin O'Meara for much
helpful correspondence on these matters.

\section{Appendix: Some submodules and factor modules}\label{S.gens.socs}

This section, except for the final corollary,
can be read independently of the rest of this note.
Rings are not here assumed commutative
(but are still associative and unital), and
``module'' means left module.
The Jacobson radical of a ring $A$ is denoted $J(A).$

Statement~(ii) in each of the next two results describes how
certain modules can be decomposed into fairly ``small'' modules:
in the first case, as a sum of submodules $N$ such
that $N/J(A)N$ is simple; in the second, as a subdirect
product of modules $L$ with $\r{socle}(L)$ simple.

\begin{lemma}\label{L.M_as_sum}
Let $A$ be an Artinian ring, and $M$ an $\!A\!$-module
\textup{(}not necessarily Artinian\textup{)}.
Then

\textup{(i)} If $S$ is a simple submodule of $M/J(A)M,$
then $M$ has a submodule $N$ such that the inclusion $N\subseteq M$
induces an isomorphism $N/J(A)N\cong S\subseteq M/J(A)M.$

Hence

\textup{(ii)} Given a decomposition of $M/J(A)M$
as a sum of simple modules $\sum_{i\in I} S_i,$ one
can write $M$ as the sum of a family of submodules
$N_i$ $(i\in I),$ such that for each $i,$ $N_i/J(A)N_i\cong S_i,$
and $S_i$ is the image of $N_i$ in $M/J(A)M.$
\end{lemma}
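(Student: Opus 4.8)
The plan is to prove (i) first, since (ii) follows by applying (i) to each summand $S_i$ and taking the sum of the resulting submodules $N_i$ — the only thing to check there is that the image of $N_i$ in $M/J(A)M$ is exactly $S_i$, which is immediate from the isomorphism $N_i/J(A)N_i \cong S_i$ induced by inclusion.

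For (i), the natural approach is to choose $N$ as small as possible with the property that its image in $M/J(A)M$ contains (hence equals) $S$. Concretely, pick $x \in M$ whose image generates $S$ in $M/J(A)M$, and let $N = Ax$. Then the image of $N$ in $M/J(A)M$ is a nonzero submodule of the simple module $S$, so it equals $S$; thus $N + J(A)M \supseteq$ (the preimage of $S$), and more to the point $N$ surjects onto $S$ with kernel $N \cap J(A)M$. The issue is to identify $N \cap J(A)M$ with $J(A)N$: one inclusion, $J(A)N \subseteq N \cap J(A)M$, is trivial. For the reverse, the key observation is that $N = Ax = J(A)N + (\text{a lift of } S)$, so $N/J(A)N$ is generated by one element whose image spans a copy of $S$; since $A$ is Artinian, $J(A)$ is nilpotent, and Nakayama gives that $N \cap J(A)M$, being contained in $J(A)M$, maps to $0$ in $M/J(A)M$, i.e. $N \cap J(A)M \subseteq J(A)M$. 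I would then argue that $N/(J(A)N)$ is already simple: it is cyclic, its top $N/J(A)N$ surjects onto $S$, and minimality of the choice $N = Ax$ forces equality. More carefully: consider the composite $N \hookrightarrow M \to M/J(A)M$; its image is $S$ and its kernel is $N \cap J(A)M$. We want this kernel to be $J(A)N$. Since $J(A)N \subseteq N \cap J(A)M$ is clear, it suffices to show $N \cap J(A)M \subseteq J(A)N$. Now $N = Ax$, so $J(A)N = J(A)x$; and an element of $N$ is $ax$ for some $a \in A$, lying in $J(A)M$ iff $ax \in J(A)M$. Because the image of $x$ in $M/J(A)M$ spans $S$ and $S$ is simple, the set $\{a \in A : ax \in J(A)M\}$ is a left ideal $\mathfrak{a}$ with $A/\mathfrak{a} \cong S$ as left $A$-modules (it is the annihilator of the image $\bar x$ in $S$); hence $\mathfrak{a} \supseteq J(A)$ since $S$ is simple and $J(A)$ annihilates every simple module, but also $\mathfrak{a} \neq A$. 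So $\{ax : a \in \mathfrak{a}\} = \mathfrak{a}x$, and I need $\mathfrak{a}x \subseteq J(A)x$. This need not hold literally as sets, but it holds after we observe $\mathfrak{a}x = \mathfrak{a} N'$-style — the cleaner route is: $N \cap J(A)M$ is the kernel of $N \to S$, and since $S \cong A/J(A) \otimes (\text{something})$...

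Let me restate the cleanest version of the plan. Set $\mathfrak{a} = \{a \in A : a\bar x = 0 \text{ in } M/J(A)M\}$, a maximal left ideal containing $J(A)$ with $A/\mathfrak{a} \cong S$. Put $N = Ax$. The map $A \to N \to N/J(A)N$ sending $a \mapsto ax \mapsto \overline{ax}$ is surjective, and I claim its kernel is exactly $\mathfrak{a}$, which will give $N/J(A)N \cong A/\mathfrak{a} \cong S$ and, chasing the diagram, that the inclusion $N \subseteq M$ induces precisely the inclusion $S \subseteq M/J(A)M$. The kernel of $a \mapsto \overline{ax}$ contains $\mathfrak{a}$: if $a \in \mathfrak{a}$ then $ax \in J(A)M$, so $ax \in N \cap J(A)M$; and I must show $N \cap J(A)M = J(A)N$. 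The containment $\supseteq$ is clear. For $\subseteq$: since $J(A)$ is nilpotent (as $A$ is Artinian), if $N \cap J(A)M \not\subseteq J(A)N$ then $(N \cap J(A)M) + J(A)N$ strictly contains $J(A)N$ inside $N$, but its image in the simple (once we know it is simple!) module $N/J(A)N$ is $0$ — circular again. The honest fix is to use minimality: among all submodules $N'$ of $M$ whose image in $M/J(A)M$ equals $S$, choose one that is minimal (possible since $M/J(A)M$-image is fixed and $A$ is Artinian so we can take $N' = Ax$ which is Artinian). For such minimal $N'$, I claim $N' \cap J(A)M = J(A)N'$: if $y \in N' \cap J(A)M$ with $y \notin J(A)N'$, then by Nakayama applied to the Artinian module $N'$, $N' = Ay + (\text{other generators})$, but $y \in J(A)M$ so... — still need care. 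The cleanest: take $N'$ minimal as above; then $N' = J(A)N' + Ax$ for any $x \in N'$ lifting a generator of $S$ (since $Ax$ already maps onto $S$, minimality forces $N' = Ax$), so $N'$ is cyclic; then $N'/J(A)N'$ is cyclic and maps onto $S$; if the map $N'/J(A)N' \to S$ had nonzero kernel $K/J(A)N'$, then its preimage $K \subsetneq N'$ still maps onto $S$ in $M/J(A)M$ (as $K + J(A)M \supseteq$ preimage of $S$... need $K \to S$ surjective, which holds since $N' \to S$ factors through $N'/J(A)N' \to S$ and... ) — contradicting minimality. Hence $N'/J(A)N' \cong S$ and the induced map to $M/J(A)M$ is the inclusion of $S$.

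I expect the main obstacle to be exactly this last point: pinning down that for a minimal such $N$, the surjection $N/J(A)N \to S$ (induced by the inclusion $N \hookrightarrow M$ followed by the quotient) is injective, i.e. that $N \cap J(A)M = J(A)N$. Everything else is bookkeeping. The resolution I would commit to in the writeup is: \emph{take $N = Ax$ for a single $x$ lifting a generator of $S$, note $N$ is an Artinian module, pass to a minimal submodule $N_0 \subseteq N$ still surjecting onto $S$ in $M/J(A)M$, observe $N_0$ is cyclic (minimality $+$ Nakayama), and then use that a cyclic module over an Artinian ring has simple top, so $N_0/J(A)N_0 \cong A/\mathfrak{a} \cong S$ with the isomorphism being the one induced by inclusion.} Then (ii) is a one-line consequence: apply (i) to each $S_i$, get $N_i$ with $N_i/J(A)N_i \cong S_i$ realized inside $M/J(A)M$ as $S_i$; since $\sum_i S_i = M/J(A)M$, the submodule $\sum_i N_i$ satisfies $(\sum_i N_i) + J(A)M = M$, whence $\sum_i N_i = M$ by Nakayama (using $J(A)$ nilpotent, or just $J(A)M$ superfluous, which holds as $A$ is Artinian — note $M$ itself need not be finitely generated, so one should phrase Nakayama via nilpotence of $J(A)$: $M = \sum N_i + J(A)M = \sum N_i + J(A)(\sum N_i + J(A)M) = \dots$, and $J(A)^k = 0$ finishes it, provided $J(A)M \subseteq \sum N_i$; since $J(A)M = J(A)(\sum N_i + J(A)M)$ iterate to get $J(A)M \subseteq \sum N_i$). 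This completes the proof.
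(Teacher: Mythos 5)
Your reduction of (ii) to (i) is fine and matches the paper: sum the $N_i$, note $M=\sum_i N_i+J(A)M$, and kill the $J(A)M$ term by iterating (nilpotence of $J(A)$ for $A$ Artinian); the paper cites \cite[Theorem~23.16]{TYL1} for the same step. Your setup for (i) is also the right one: take $x$ lifting a generator of $S$, so $Ax$ has finite length, and pass to a submodule $N_0\subseteq Ax$ minimal among those whose image in $M/J(A)M$ is $S$.

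But the step you finally commit to for finishing (i) is not correct. You assert that ``a cyclic module over an Artinian ring has simple top'' and conclude $N_0/J(A)N_0\cong A/\mathfrak{a}\cong S$. That assertion is false outside the local case: for $A=k\times k$ the cyclic module $A$ has top $k\times k$, which is not simple, and the same happens for any non-local Artinian ring. (Your earlier attempt at a contradiction is also off: the submodule of $N'$ you take is the preimage of the \emph{kernel} of $N'/J(A)N'\to S$, and that preimage maps to $0$ in $S$, not onto $S$ --- you notice this and trail off.) The missing idea, which is exactly what the paper uses, is that $N_0/J(A)N_0$ is a module over the semisimple ring $A/J(A)$ and hence is itself semisimple; therefore the surjection $N_0/J(A)N_0\to S$ splits, i.e.\ there is a submodule $S'\subseteq N_0/J(A)N_0$ (a complement of the kernel) mapping isomorphically onto $S$. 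If $S'$ were proper, its preimage in $N_0$ would be a proper submodule still mapping onto $S$, contradicting minimality of $N_0$; hence $N_0/J(A)N_0=S'\cong S$, and the isomorphism is the one induced by inclusion. With that substitution your argument closes up and coincides with the paper's.
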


\begin{proof}
In the situation of~(i),
let $x$ be any element of $M$ whose image in $M/J(A)M$
is a nonzero member of $S.$
Thus, the image of $A\,x$ in $M/J(A)M$ is $S.$
Since $A$ is Artinian, $A\,x$ has finite length, hence we can
find a submodule $N\subseteq A\,x$ minimal for having $S$ as its image
in $M/J(A)M.$
Now since $N/J(A)N$ is semisimple, it
has a submodule $S'$ which maps isomorphically to $S$ in $M/J(A)M.$
If $S'$ were a proper submodule of $N/J(A)N,$ then its inverse
image in $N$ would
be a proper submodule of $N$ which still mapped surjectively
to $S,$ contradicting the minimality of $N.$
Hence $N/J(A)N=S'\cong S,$ completing the proof of~(i).

In the situation of~(ii), choose for each $S_i$
a submodule $N_i\subseteq M$ as in~(i).
Then we see that $M=J(A)M+\sum_i N_i,$ hence since $A$ is Artinian,
$M=\sum_i N_i$ \cite[Theorem~23.16\,$(1)\!{\implies}\!(2')$]{TYL1},
as required.
\end{proof}

The next result is of a dual sort, but the arguments can be
carried out in a much more general context, so that the result we are
aiming for (the final sentence) looks like an afterthought.
Note that submodules called $S$ etc.\ are not here assumed simple.

\begin{lemma}\label{L.M_as_subdir}
Let $A$ be a ring and $M$ an $\!A\!$-module.
Then

\textup{(i)} If $S$ is any submodule of $M,$ then $M$
has a homomorphic image $M/N$ such that the composite
map $S\hookrightarrow M\to M/N$ is an embedding, and
the embedded image of $S$ is essential in $M/N.$

Hence

\textup{(ii)} If $E$ is an essential submodule of $M,$
and $f: E\to\prod_I S_i$ a subdirect decomposition of $E,$
then there exists a subdirect decomposition $g:M\to\prod_i M_i$
of $M,$ such that each $M_i$ is an overmodule of $S_i$ in which $S_i$
is essential, and $f$ is the restriction of $g$ to $E\subseteq M.$

In particular, every locally Artinian module can be written
as a subdirect product of locally Artinian modules with simple socles.
\end{lemma}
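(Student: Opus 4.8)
The plan is to establish the three assertions in turn, deriving (ii) from (i) and the final sentence from (ii). For (i), given a submodule $S\subseteq M$, I would invoke Zorn's lemma to choose a submodule $N\subseteq M$ maximal with respect to the property $N\cap S=0$ (the relevant family is closed under unions of chains, so this is legitimate). Since $N\cap S=0$, the composite $S\hookrightarrow M\to M/N$ is injective, and I claim its image $(S+N)/N$ is essential in $M/N$. Indeed, every submodule of $M/N$ has the form $L/N$ with $N\subseteq L$, and if $(L/N)\cap((S+N)/N)=0$ then $L\cap(S+N)=N$, hence $L\cap S\subseteq N\cap S=0$; by maximality of $N$ this forces $L=N$, i.e.\ $L/N=0$. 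That proves (i).

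For (ii), put $K_i=\ker(E\to S_i)$ for each $i\in I$, so that $S_i\cong E/K_i$ and $\bigcap_i K_i=\ker f=0$. The point is that (i), applied not to $M$ but to the module $M/K_i$ with its submodule $E/K_i$, produces a submodule $N_i\supseteq K_i$ of $M$ with $N_i\cap E=K_i$, such that $E/K_i$ embeds as an essential submodule of $M_i:=M/N_i$. Letting $g\colon M\to\prod_i M_i$ be the product of the quotient maps, one checks that the composite $E\to M\to M_i$ has kernel $K_i$ and image the essential copy of $S_i$ in $M_i$, so that $g|_E=f$ under the evident identifications; that $\ker g=\bigcap_i N_i$ meets $E$ in $\bigcap_i(N_i\cap E)=\bigcap_i K_i=0$, whence $\bigcap_i N_i=0$ by essentiality of $E$ in $M$, so $g$ is injective; and that each projection $M\to M_i$ is surjective. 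Thus $g$ is the required subdirect decomposition.

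For the final statement, assume $M$ is locally Artinian and nonzero (the zero module being trivially such a subdirect product). Every nonzero cyclic submodule of $M$ is Artinian, hence contains a simple submodule, so $E:=\mathrm{socle}(M)$ is essential in $M$. Being semisimple, $E=\bigoplus_{i\in I}T_i$ with each $T_i$ simple, and the coordinate projections give a subdirect decomposition $f\colon E\to\prod_i T_i$. Applying (ii) yields a subdirect decomposition $g\colon M\to\prod_i M_i$ with each $T_i$ essential in $M_i$. Since $T_i$ is simple and essential in $M_i$, while $\mathrm{socle}(M_i)$ is semisimple and contains $T_i$, a strict inclusion $T_i\subsetneq\mathrm{socle}(M_i)$ would furnish a nonzero submodule of $\mathrm{socle}(M_i)$ meeting $T_i$ trivially, contradicting essentiality; hence $\mathrm{socle}(M_i)=T_i$ is simple. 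Finally, each $M_i$ is a homomorphic image of $M$, so is again locally Artinian: a cyclic submodule of $M/N_i$ is the image of a cyclic (hence Artinian) submodule of $M$, and homomorphic images of Artinian modules are Artinian.

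Most of this is routine once the right maximal submodule has been singled out; the step that requires a moment's thought is recognizing that (ii) really is a consequence of (i). The tempting but doomed move is to try to push the essentiality of $E$ in $M$ down to essentiality of $E/K_i$ in $M/K_i$, which fails in general; feeding $M/K_i$ and $E/K_i$ into (i) directly avoids any such transport. I would also take care to spell out, for the last sentence, why ``locally Artinian'' is inherited by homomorphic images and why a nonzero locally Artinian module has essential socle, since the conclusion rests on both facts.
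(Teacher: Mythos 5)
Your proof is correct and follows essentially the same route as the paper's: for (i), a Zorn-maximal complement $N$ of $S$ with the standard essentiality argument; for (ii), applying (i) to $M/K_i$ with submodule $E/K_i\cong S_i$ and using essentiality of $E$ in $M$ to see the product map is injective; and for the last sentence, the essentiality of the socle of a locally Artinian module. The extra care you take over why $M_i$ is again locally Artinian and has simple socle is a detail the paper leaves implicit, but the argument is the same.
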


\begin{proof}
In the situation of~(i), let $N$ be maximal among submodules
of $M$ having trivial intersection with $S.$
The triviality of this intersection means that $S$ embeds
in $M/N,$ while the maximality condition makes
the image of $S$ essential therein.
(Indeed, if it were not essential, $M/N$ would have a
nonzero submodule $T$ disjoint from the image of $S,$
and the inverse image of $T$ in $M$ would contradict the
maximality of $N.)$

In the situation of~(ii), for each $j\in I$ let $K_j$ be the kernel of
the composite $E\to\prod_I S_i\to S_j.$
Applying statement~(i) with $M/K_j$ in the role of $M,$ and
$E/K_j\cong S_j$ in the role of $S,$ we get an
image $M_j$ of $M/K_j,$ and
hence of $M,$ in which $S_j$ is embedded and is essential.
Now since $E$ is essential in $M,$ every
nonzero submodule $T\subseteq M$ has
nonzero intersection with $E,$ and that
intersection has nonzero projection to $S_i$ for some $i;$
so in particular, for that $i,$ $T$ has nonzero image in $M_i.$
Since this is true for every $T,$
the map $M\to\prod_I M_i$ is one-to-one,
and gives the desired subdirect decomposition.

The final assertion follows from the fact that the socle
of a locally Artinian module is essential, and, being semisimple,
can be written as a subdirect product (indeed, as a direct sum)
of simple modules.
\end{proof}

We can now get the following result, showing that given a
{\em faithful} module $M$ over an Artinian ring, we can carve
out of $M$ a ``small'' faithful submodule,
factor-module, or subfactor.
Note that in the statement, though length has its usual meaning for
modules, the length of $\r{socle}(A)$ as a bimodule may be
less than its length as a left (or right) module.

\begin{proposition}\label{P.socle}
Let $A$ be an Artinian ring,
let $n$ be the length of $\r{socle}(A)$ as a bimodule
\textup{(}equivalently, as a $\!2\!$-sided ideal\textup{)},
and let $M$ be a faithful $\!A\!$-module.
Then

\textup{(i)} $M$ has a submodule $M'$ which is again faithful over $A,$
and satisfies $\lt(M'/J(A)\,M')\leq n.$
\textup{(}In particular, $M'$ is generated by $\leq n$
elements.\textup{)}

\textup{(ii)} $M$ has a homomorphic image $M''$ which is faithful
over $A,$ and satisfies $\lt(\r{socle}(M''))\leq\nolinebreak n.$

\textup{(iii)} $M$ has a subfactor faithful
over $A$ which has both these properties.
\end{proposition}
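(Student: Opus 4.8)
The plan is to establish (i) and (ii) by parallel arguments and then to obtain (iii) by applying (i) and then (ii). The common engine is the observation that faithfulness of an $\!A\!$-module is detected by the two-sided ideal $\r{socle}(A)$: since $A$ is Artinian it is left Noetherian, hence of finite length as a left module over itself, so every nonzero two-sided ideal $\mathfrak{b}$ of $A$ (being a nonzero sub-bimodule of $A$) has finite length as a bimodule and therefore contains a simple sub-bimodule, which by definition lies in $\r{socle}(A)$; thus $\mathfrak{b}\cap\r{socle}(A)\neq 0$. From this I would extract the combinatorial fact used twice below: \emph{if $\{\mathfrak{b}_i\}_{i\in I}$ is any family of two-sided ideals of $A$ with $\bigcap_{i\in I}\mathfrak{b}_i=0$, then $\bigcap_{i\in F}\mathfrak{b}_i=0$ for some $F\subseteq I$ with $|F|\leq n$.} Indeed, intersecting everything with $\r{socle}(A)$ (which, by the preceding remark, cannot kill a nonzero intersection of two-sided ideals) reduces this to the assertion that in a module of length $n$ a family of submodules with intersection $0$ admits such a subfamily of size $\leq n$; and that one proves by beginning with a member $C_{i_1}$ properly contained in the whole module (one exists, since the full intersection is $0$) and then, at each of at most $n-1$ further steps, intersecting with a member of the family that strictly shrinks the running intersection.

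For (i): apply Lemma~\ref{L.M_as_sum}(ii) to write $M=\sum_{i\in I}N_i$ with each $N_i/J(A)N_i$ simple; because $J(A)$ is nilpotent, each such $N_i$ is in fact cyclic. Faithfulness of $M$ says $\bigcap_{i\in I}\r{Ann}_A(N_i)=\r{Ann}_A(M)=0$, so by the combinatorial fact there is $F$ with $|F|\leq n$ and $\bigcap_{i\in F}\r{Ann}_A(N_i)=0$. Then $M'=\sum_{i\in F}N_i$ is faithful, is generated by the $\leq n$ generators of the $N_i$, and $M'/J(A)M'$ is a homomorphic image of $\bigoplus_{i\in F}N_i/J(A)N_i$, hence of length $\leq n$.

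For (ii): since $A$ is left Artinian, every nonzero submodule of $M$ has a simple submodule, so $\r{socle}(M)$ is essential in $M$. Write $\r{socle}(M)=\bigoplus_{i\in I}U_i$ with each $U_i$ simple, and feed the evident subdirect decomposition $\r{socle}(M)\hookrightarrow\prod_{i\in I}U_i$ into Lemma~\ref{L.M_as_subdir}(ii): this produces a subdirect decomposition $g\colon M\hookrightarrow\prod_{i\in I}M_i$ with surjective components $p_i\colon M\to M_i$, in which each $U_i$ is essential in $M_i$, so that $\r{socle}(M_i)=U_i$ is simple. As $g$ is injective and $M$ is faithful, $\bigcap_{i\in I}\r{Ann}_A(M_i)=\r{Ann}_A(\prod_{i\in I}M_i)=0$; the combinatorial fact gives $F$ with $|F|\leq n$ and $\bigcap_{i\in F}\r{Ann}_A(M_i)=0$. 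Let $M''$ be the image of $M$ in $\prod_{i\in F}M_i$ under the corresponding components of $g$. Then $M''$ is a homomorphic image of $M$, its annihilator contains $\bigcap_{i\in F}\r{Ann}_A(M_i)=0$ so it is faithful, and $\r{socle}(M'')=M''\cap\prod_{i\in F}U_i$ has length $\leq|F|\leq n$.

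For (iii), apply (ii) not to $M$ but to the submodule $M'$ produced in (i), which is itself faithful over $A$ (and of finite length): we get a homomorphic image $M''$ of $M'$ that is faithful over $A$ with $\lt(\r{socle}(M''))\leq n$, and since $M''$ is a quotient of $M'$, the module $M''/J(A)M''$ is a quotient of $M'/J(A)M'$, so $\lt(M''/J(A)M'')\leq n$ as well; thus $M''$ is a subfactor of $M$ with both properties. The main obstacle I anticipate is getting the first paragraph right: one must see that the correct measure of faithfulness is the length of $\r{socle}(A)$ \emph{as a bimodule} (not as a one-sided module), verify that $\r{socle}(A)$ meets every nonzero two-sided ideal, and extract from this the uniform bound $n$. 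A secondary point to watch is that $M$ is not assumed Artinian, so cyclicity of the $N_i$ in (i) must come from nilpotence of $J(A)$, and essentiality of $\r{socle}(M)$ in (ii) from left-Artinian-ness of $A$ rather than any finiteness of $M$.
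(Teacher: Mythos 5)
Your proof is correct and follows essentially the same route as the paper's: both use Lemma~\ref{L.M_as_sum}(ii) for (i), Lemma~\ref{L.M_as_subdir} for (ii), combine them for (iii), and in each case the quantitative step is the same strictly decreasing chain of sub-bimodules of $\r{socle}(A),$ which must stop within $n$ steps. (The only loose point is your claim that a simple sub-bimodule lies in $\r{socle}(A)$ ``by definition'': if $\r{socle}(A)$ denotes the left socle this needs a one-line argument --- e.g.\ a nonzero two-sided ideal of a left Artinian ring already contains a minimal \emph{left} ideal --- but this does not affect the proof.)
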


\begin{proof}
To get~(i), write $M/J(A)M$ as a direct sum of simple
$\!A\!$-modules $S_i$ $(i\in I),$ and take a generating family
of submodules $N_i\subseteq M$ as in Lemma~\ref{L.M_as_sum}(ii).
Since $M=\sum_i N_i$ is faithful, and $\r{socle}(A)$
has length $\leq n$ as a
$\!2\!$-sided ideal, the sum of some family of $\leq n$
of these submodules, say
\begin{equation}\begin{minipage}[c]{35pc}\label{d.M'}
$M'\ =\ N_{i_1}+\dots+N_{i_m}$\quad where $m\leq n,$
\end{minipage}\end{equation}
must have the property that $M'$ is annihilated by no
nonzero subideal of $\r{socle}(A).$
(Details: one chooses the $N_{i_j}$ recursively.
As long as $N_{i_1}+\dots+N_{i_j}$ is annihilated by a nonzero
subideal $I\subseteq\r{socle}(A),$ one can choose an $N_{i_{j+1}}$
which fails to be annihilated by some member of $I.$
Thus, the annihilators in $\r{socle}(A)$ of successive
sums $M'=N_{i_1}+\dots+N_{i_j}$ $(j=0,1,\dots)$
form a strictly decreasing chain.
By our assumption on the length of $\r{socle}(A),$ this chain
must terminate after $\leq n$ steps
with a sum $M'$ annihilated by no nonzero elements
of $\r{socle}(A).)$
But an ideal of an Artinian ring having zero intersection
with the socle is zero, so $M'$ has zero annihilator, i.e., is faithful.
Since each $N_i$ satisfies $\lt(N_i/J(A)N_i)=1,$
we have $\lt(M'/J(A)M')\leq m\leq n.$

Statement (ii) is proved in the analogous way
from the final statement of Lemma~\ref{L.M_as_subdir}, using
images of $M$ in products of finite subfamilies
of the $M_i$ in place of submodules of $M$
generated by finite subfamilies of the $N_i.$

Statement (iii) follows by combining~(i) and~(ii).
\end{proof}

Returning to the ideas of the body of this note, we deduce from
Proposition~\ref{P.socle}(iii) the following
fact which was mentioned before Question~\ref{Q.better}.

\begin{corollary}\label{C.socle+Q.A}
If a commutative local Artinian ring $A$ having socle of
length $n$ fails to satisfy\textup{~(\ref{d.Q.A})}, then any $M$
of minimal length among $\!A\!$-modules witnessing this failure is
generated by $\leq n$ elements and has socle of length $\leq n.$
\end{corollary}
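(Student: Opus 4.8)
The plan is to obtain this immediately from Proposition~\ref{P.socle}(iii). The first point to note is that since $A$ is commutative, its two-sided ideals coincide with its submodules, so the length of $\r{socle}(A)$ as a bimodule is the same as its length as a module, namely the given integer $n$; thus the ``$n$'' of Proposition~\ref{P.socle} matches the ``$n$'' of the corollary.

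Next, if $A$ fails to satisfy~(\ref{d.Q.A}), then by definition there is a faithful $\!A\!$-module of length strictly less than $\lt(A)$. Since $A$ is Artinian, $\lt(A)<\infty$, so every such module has finite length, and I would pick one, call it $M$, of least possible length. I would then apply Proposition~\ref{P.socle}(iii) to $M$: it yields a subfactor of $M$ --- a module of the form $M_0/M_1$ with $M_1\subseteq M_0\subseteq M$ --- which is faithful over $A$ and has both $\lt((M_0/M_1)/J(A)(M_0/M_1))\leq n$ (hence is generated by $\leq n$ elements) and $\lt(\r{socle}(M_0/M_1))\leq n$.

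It remains only to observe that this subfactor is $M$ itself. Indeed, $M_0/M_1$ is a faithful $\!A\!$-module with $\lt(M_0/M_1)=\lt(M_0)-\lt(M_1)\leq\lt(M)<\lt(A)$, so it too witnesses the failure of~(\ref{d.Q.A}); by the minimality of $M$ we must have $\lt(M_0/M_1)=\lt(M)$. Then $\lt(M_0)-\lt(M_1)=\lt(M)$ together with $M_1\subseteq M_0\subseteq M$ forces $\lt(M_1)=0$ and $\lt(M_0)=\lt(M)$, i.e.\ $M_1=\{0\}$ and $M_0=M$. Hence $M=M_0/M_1$ enjoys both stated properties. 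There is essentially no obstacle in this argument: all the real work is in Proposition~\ref{P.socle}, and the only thing to verify here is the trivial bookkeeping that a faithful subfactor of $M$ whose length is not less than $\lt(M)$ must equal $M$.
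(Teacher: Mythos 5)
Your argument is correct and is essentially the paper's own proof: both apply Proposition~\ref{P.socle}(iii) and note that minimality of $\lt(M)$ forces the faithful subfactor produced there to be $M$ itself (the paper states this in one line; you spell out the bookkeeping, including the harmless observation that for commutative $A$ the bimodule length of $\r{socle}(A)$ equals its module length $n$).
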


\begin{proof}
By the minimal-length assumption on $M,$ no proper subfactor
of $M$ can be faithful, hence the subfactor given by
Proposition~\ref{P.socle}(iii) must be $M$ itself.
\end{proof}

\end{document}